\pgfplotsset{compat=1.18}
\newcommand{\bs}{\backslash}
\newcommand{\N}{\mathbb N}
\newcommand{\Z}{\mathbb Z}
\newcommand{\R}{\mathbb R}
\newcommand{\mbv}{\mathbf{v}}
\newcommand{\mfm}{\mathfrak{m}}
\renewcommand{\l}{\lambda}
\newcommand{\mb}[1]{\mathbf{#1}}
\newcommand{\quot}[1]{\widetilde{#1}}
\newcommand{\wk}[1]{{^*}#1}
\newcommand{\wkp}[1]{{^{*p}}\hspace{-2pt}#1}
\newcommand{\wkq}[1]{{^{*q}}\hspace{-2pt}#1}
\newcommand{\fte}{\operatorname{Fte}}
\newcommand{\fbp}[1]{\left[ #1 \right]}
\renewcommand{\phi}{\varphi}
\theoremstyle{definition}
\newtheorem{thm}{Theorem}[section]
\newtheorem{cor}[thm]{Corollary}
\newtheorem{prop}[thm]{Proposition}
\newtheorem{ex}[thm]{Example}
\newtheorem{rmk}[thm]{Remark}
\newtheorem*{defn*}{Definition}
\newtheorem{defn}[thm]{Definition}
\newtheorem{alg}{Algorithm}
\newtheorem{mainthm}{Theorem}
\definecolor{purple(x11)}{rgb}{0.63, 0.36, 0.94}
\renewcommand{\int}{\operatorname{int}}
\numberwithin{equation}{section}
\title{The weak normalization of an affine semigroup}
\author{Kyle Maddox \and Srishti Singh}
\date{May 15, 2025}
\thanks{2020  \textit{Mathematics Subject Classification}. Primary: 20M25, 13A35}
\thanks{Keywords: affine semigroups, weak normalization, Frobenius closure, Frobenius test exponent, $F$-injective, $F$-nilpotent}
\begin{document}
\begin{abstract}
    In this article, we define and explore the weak normalization of an affine semigroup. In particular, for a fixed prime integer, we provide a geometric description of the weak normalization of an affine semigroup with respect to that prime, which corresponds to the weak normalization of the affine semigroup ring over a field of that prime characteristic, similar to the description of the seminormalization of an affine semigroup given by Reid-Roberts. We then use this description to understand the singularities of an affine semigroup ring defined over a field of prime characteristic and provide several examples. In particular, we demonstrate that all affine semigroup rings defined over fields of prime characteristic have a uniform upper bound on the Frobenius test exponent of all ideals, which provides a large and important class of examples with a positive answer to a question of Katzman-Sharp on uniformity of Frobenius test exponents. Finally, we provide an algorithm and implementation to compute the weak normalization of an affine semigroup, as well as the Frobenius test exponent and Frobenius closures of ideals in the affine semigroup ring. 
\end{abstract}
\maketitle

\section{Introduction}

Our primary motivation in this article is to provide a computable adjustment of an affine semigroup to study the singularities of the corresponding affine semigroup ring defined over a field of prime characteristic. An affine semigroup is a finitely generated sub-monoid of $\mathbb{N}^n$ (or sometimes $\mathbb{Z}^n$). Beyond giving rise to rich combinatorial and geometric structure, they also provide a natural structure to study subrings of polynomial rings generated by monomials, called affine semigroup rings. Affine semigroups rings arise naturally in algebraic geometry as coordinate rings of toric varieties, which ``have provided a remarkably fertile testing ground for general theories" as is often-quoted from W. Fulton's famous work on the subject \cite{Fulton}.

One deficiency in the rich history of affine semigroup rings is that they have primarily been studied in the case that the affine semigroup is normal. Normal affine semigroups are especially well-behaved, essentially they have no gaps and so are more able to be studied from the perspective of convex geometry. As an example of this perspective, M. Hochster famously showed in \cite{Hochster} that a normal affine semigroup ring is a direct summand of a polynomial ring, which implies that it must be Cohen-Macaulay (and much more). 

A weaker notion than normality is \textit{seminormality}, and the \textit{seminormalization} (see \Cref{defn: seminormalization}) of an affine semigroup was studied by L. Reid and L. Roberts in \cite{Reid2001}, who provided a geometric characterization of the seminormalization. A seminormal affine semigroup ring can exhibit more subtle singularities than a normal affine semigroup ring, e.g. seminormal does not imply Cohen-Macaulay. While normality is strongly connected to the convex geometry of the cone of the semigroup, seminormality is a more algebraic condition. The primary condition we study in this article is in-between seminormality and normality.

Many primary references on the subject also focus on affine semigroup rings defined over the complex numbers. However, modern commutative algebra and algebraic geometry make extensive use of prime characteristic techniques, even in the study of rings of characteristic $0$ via reduction to prime characteristic. In particular, the study of singularities defined in terms of properties of the Frobenius map, called $F$-singularities, has exploded since the 1970's, as E. Kunz showed in \cite{Kunz} that a Noetherian ring of prime characteristic is regular if and only if the Frobenius map is flat.  

Of primary interest in this article are two $F$-singularities in particular; \textit{$F$-injective} and \textit{$F$-nilpotent}, see \Cref{defn: F-inj} and \Cref{defn: F-nil}. Both are defined in terms of the natural Frobenius action on local cohomology, and so one should expect these properties are difficult to verify for a particular ring. In the context of affine semigroup rings, however, work of Bruns-Li-R\"omer in \cite{Bruns2006} (for $F$-injective) and Dao-Maddox-Pandey in \cite{DMP} (for $F$-nilpotent) demonstrate that, for affine semigroup rings, these singularities are easier to verify by computational techniques, using associated primes of certain abelian groups coming from the semigroup and its cone in \cite{Bruns2006} and from properties of the normalization map in \cite{DMP}. To help unify these stories, we introduce the \textit{$p$-weak normalization} of an affine semigroup $A$, which we denote $\wkp{A}$. 

\begin{defn*}[\Cref{defn: p-weak normalization}]
    Let $A\subset \mathbb{N}^n$ be an affine semigroup. For a prime number $p$, define the \textbf{$p$-weak normalization of $A$} to be the set $\wkp{A} = \left\lbrace \left. \mb v \in \overline{A} \,\right|\, p^e \mb v \in A \text{ for some } e \in \N\right\rbrace.$
\end{defn*}

Fitting between the seminormalization $^{+}A$ and normalization $\overline{A}$, $\wkp{A}$ is another affine semigroup containing $A$ which helps capture the combinatorial information of the Frobenius map on the associated affine semigroup ring. However, unlike the normalization and seminormalization, the definition of weak normalization necessarily incorporates a fixed prime number and we show through several examples that as $p$ varies, $\wkp{A}$ can vary as well. For a fixed $A$, however, only finitely many distinct $p$-weak normalizations can exist, see \Cref{blr weakly normal}. 

Both the normalization and seminormalization have geometric description in terms of the group $G(A)$ and cone $C(A)$ of $A$ (see \Cref{subsec: affine semigroups}). We provide a similar geometric description of $\wkp{A}$, which utilizes an adjustment the quotient of an affine semigroup by a natural number, see \cite{Onthequotient}. In particular, for an affine semigroup $A$ and natural number $m$ we write $\quot{A/m} = \{ \mb v \in \overline{A} \mid m \mbv \in A\}$. 

\begin{mainthm}[\Cref{thm: geometric description of p-weak normalization}]
Let $A\subset \mathbb{N}^n$ be an affine semigroup and fix a prime $p$. Recall there is an $N_0$ such that $p^{N_0}(\wkp{A}) \subset A$. Then \[
        \wkp{A} = \bigcup_{\substack{F \text{ is a}\\ \text{face of } C(A)}} 
           \left( \quot{\frac{A \cap F}{p^{N_0}}} \cap \int(F)\right).
   \] 
\end{mainthm}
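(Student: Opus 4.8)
The plan is to establish the identity by a double inclusion, after two clean reductions. First I would use the uniform bound to replace the existential condition by a single exponent: for $\mb v\in\overline A$ one has $\mb v\in\wkp A$ iff $p^{N_0}\mb v\in A$, since $p^e\mb v\in A$ for some $e$ gives $\mb v\in\wkp A$ and hence $p^{N_0}\mb v\in p^{N_0}(\wkp A)\subset A$, while the converse is immediate. Thus $\wkp A=\{\mb v\in\overline A : p^{N_0}\mb v\in A\}$. Second, I would invoke the standard fact that $C(A)$ is the disjoint union of the relative interiors $\int(F)$ of its faces; as $\wkp A\subset\overline A\subset C(A)$, each $\mb v\in\wkp A$ lies in $\int(F)$ for a unique face $F$. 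Since the $\int(F)$ are pairwise disjoint, the theorem reduces to the face-by-face statement
\[
   \{\mb v\in\overline A\cap\int(F) : p^{N_0}\mb v\in A\}=\quot{\frac{A\cap F}{p^{N_0}}}\cap\int(F).
\]

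The inclusion $\supseteq$ is direct: any $\mb v$ in the right-hand side lies in $\overline{A\cap F}\subseteq\overline A$ and satisfies $p^{N_0}\mb v\in A\cap F\subseteq A$, so $\mb v\in\wkp A\cap\int(F)$. For $\subseteq$, I would fix $\mb v\in\overline A\cap\int(F)$ with $p^{N_0}\mb v\in A$ and first record the standard cone fact that a face of a finitely generated rational cone is generated by the generators of the cone lying on it; applied to $A$ this yields $C(A\cap F)=F$, so $\mb v\in\int(F)\subset F=C(A\cap F)$. Writing $F=C(A)\cap\{\ell=0\}$ for a supporting functional $\ell\ge 0$ on $C(A)$, I would then expand $p^{N_0}\mb v=\sum_i n_i a_i$ in the generators $a_i$ of $A$; since $p^{N_0}\mb v\in\int(F)$ gives $0=\ell(p^{N_0}\mb v)=\sum_i n_i\ell(a_i)$ with all terms nonnegative, every generator actually occurring has $\ell(a_i)=0$, i.e.\ lies in $A\cap F$, and hence $p^{N_0}\mb v\in A\cap F$.

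I expect the main obstacle to be the remaining group-theoretic point: to conclude $\mb v\in\quot{\frac{A\cap F}{p^{N_0}}}$ I must know that $\mb v$ lies in the group defining the normalization on the face $F$, and the delicate issue is that this group must be $G(A)$ restricted to $\operatorname{span}(F)$, so that $\overline{A\cap F}=F\cap G(A)$ and consequently $\overline A\cap\int(F)=\overline{A\cap F}\cap\int(F)$. With this identification $\mb v\in G(A)$ follows at once from $\mb v\in\overline A$, closing the argument. I would stress that this is exactly the feature distinguishing weak normalization from seminormalization: the Frobenius condition $p^{N_0}\mb v\in A$ does \emph{not} force $\mb v$ into the a priori smaller lattice $\Z(A\cap F)$, and one can arrange $\Z(A\cap F)$ to sit with index divisible by $p$ inside $F\cap G(A)$, producing elements of $\wkp A$ lying on $\int(F)$ that are not seminormal; for this reason the face pieces must be formed relative to $G(A)$. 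Once this is verified, reassembling the disjoint union over all faces recovers $\{\mb v\in\overline A : p^{N_0}\mb v\in A\}=\wkp A$, completing the proof.
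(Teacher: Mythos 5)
Your proposal is correct and takes essentially the same route as the paper's proof: both reduce via the uniform exponent $N_0$ to the condition $p^{N_0}\mb v \in A$, locate each $\mb v \in \wkp{A}$ in the relative interior $\int(F)$ of a unique face $F$, use the cone property to conclude $p^{N_0}\mb v \in A\cap F$, and use $\mb v \in G(A)$ to place $\mb v$ in the face piece (your supporting-functional computation is a harmless elaboration of the paper's one-line convexity remark). Your explicit resolution of the lattice subtlety---that $\quot{(A\cap F)/p^{N_0}}$ must be formed relative to $G(A)$ rather than $G(A\cap F)$, which can sit as a proper sublattice of $G(A)\cap\operatorname{span}(F)$---is exactly the convention the paper's proof uses implicitly (it only ever verifies $\mb v \in G(A)$ and $p^{N_0}\mb v \in A\cap F$), so the two arguments coincide, with yours being the more careful about this point.
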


We also present an algorithm and implementation to compute $\wkp{A}$ given the generators of $A$ and $\overline{A}$ in \Cref{alg2}. 

Returning to the $F$-singularities side of the story, by combining the aforementioned results of Bruns-Li-R\"omer and Dao-Maddox-Pandey with the results in this article, we have the following characterization of $F$-injective and $F$-nilpotent singularities for affine semigroup rings based on our $p$-weak normalization.

\begin{mainthm}[\Cref{cor: wkNiffFinj}, \Cref{cor: weak normalization equals normalization if and only if F-nilpotent}]
Let $k$ be a field of prime characteristic $p>0$ and let $A$ be an affine semigroup. Then, $A=\wkp{A}$ if and only if $k[A]$ is $F$-injective and $\wkp{A}=\overline{A}$ if and only if $k[A]$ is $F$-nilpotent.
\end{mainthm}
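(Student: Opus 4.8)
The plan is to prove the two biconditionals separately, in each case reducing the $F$-singularity property to a combinatorial condition via the weak-normalization interpretation of $\wkp A$ and then matching that condition to the set $\wkp A$ using \Cref{thm: geometric description of p-weak normalization}. The conceptual anchor, established in the paper's development of $\wkp A$, is that $k[\wkp A]$ is the weak normalization of $k[A]$ inside $k[\overline A]$: a monomial $t^{\mb v}\in k[\overline A]$ lies in $k[\wkp A]$ precisely when some Frobenius power $t^{p^e\mb v}=(t^{\mb v})^{p^e}$ lies in $k[A]$, mirroring the defining condition $p^e\mb v\in A$. Thus $A=\wkp A$ says $k[A]$ is weakly normal, while $\wkp A=\overline A$ says the weak normalization of $k[A]$ agrees with its normalization; both statements are comparisons along the tower $k[A]\subseteq k[\wkp A]\subseteq k[\overline A]$.

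The common engine is the short exact sequence $0\to k[A]\to k[\overline A]\to Q\to 0$ of $\Z^n$-graded modules, where the cokernel $Q$ is supported on the gaps $\overline A\setminus A$ and carries the induced Frobenius action $F(\overline{t^{\mb v}})=\overline{t^{p\mb v}}$. For $\mb v\in\overline A\setminus A$ the class $\overline{t^{\mb v}}\in Q$ satisfies $F^e(\overline{t^{\mb v}})=0$ if and only if $p^e\mb v\in A$, so $Q$ detects exactly the divisibility condition defining $\wkp A$. Moreover $k[\overline A]$ is normal, hence a direct summand of a polynomial ring by Hochster's theorem and in particular $F$-split, so in the long exact sequence of local cohomology all Frobenius kernels and Frobenius-nilpotent submodules of $H^i_\mfm(k[A])$ are governed by $H^\bullet_\mfm(Q)$.

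For the first equivalence I would invoke the Bruns--Li--R\"omer description of the Frobenius action on the graded local cohomology of $k[A]$, which characterizes $F$-injectivity as injectivity of this action on every graded component. By the mechanism above, a gap $\mb v\in\wkp A\setminus A$ yields a nonzero Frobenius-torsion class and hence a kernel element, so $F$-injectivity forces $A=\wkp A$. The reverse implication is the delicate one, since weak normality does not imply $F$-injectivity for general rings; here the semigroup structure is essential, and I would run the Bruns--Li--R\"omer criterion and verify, face by face using \Cref{thm: geometric description of p-weak normalization}, that the absence of gaps with $p^e\mb v\in A$ rules out every kernel element.

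For the second equivalence I would use the Dao--Maddox--Pandey criterion expressed through the normalization map. Since $k[\overline A]$ is $F$-split, the entire failure of $F$-nilpotence is concentrated in $Q$, and $F$-nilpotence of $k[A]$ becomes the nilpotence of Frobenius on $H^\bullet_\mfm(Q)$, which holds exactly when every gap $\mb v\in\overline A\setminus A$ satisfies $p^e\mb v\in A$, that is, $\wkp A=\overline A$. The main obstacle throughout is the precise matching of the local-cohomology conditions supplied by Bruns--Li--R\"omer and Dao--Maddox--Pandey, which are phrased via associated primes and graded components, against the purely arithmetic condition $p^e\mb v\in A$; \Cref{thm: geometric description of p-weak normalization} is the tool that makes this translation explicit, and the hardest single point is the reverse $F$-injective implication, which has no soft proof and must genuinely use the combinatorial criterion.
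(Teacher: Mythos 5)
Your plan coincides with the paper's proof in most of its load-bearing parts: the anchor $\wk{k[A]}=k[\wkp{A}]$ is exactly \Cref{prop: p-weak normalization gives weak normalization}; your route for the direction ``$A=\wkp{A}\Rightarrow k[A]$ is $F$-injective'' (verify the Bruns--Li--R\"omer associated-prime criterion face by face via \Cref{thm: geometric description of p-weak normalization}) is precisely the paper's \Cref{blr weakly normal} combined with \Cref{BLRmain} --- though note \Cref{BLRmain} carries a seminormality hypothesis, which you must first extract from ${}^{+}A\subset\wkp{A}=A$; and the $F$-nilpotent equivalence is in both cases an appeal to Dao--Maddox--Pandey together with the anchor. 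Your genuine departure is the direction ``$k[A]$ $F$-injective $\Rightarrow A=\wkp{A}$'': the paper gets this by citing Schwede's theorem that excellent $F$-injective rings with dualizing complexes are weakly normal, whereas you want to extract it from the sequence $0\to k[A]\to k[\overline{A}]\to Q\to 0$.

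That departure is where your argument has a hole. You assert that a gap $\mb v\in\wkp{A}\setminus A$ ``yields a nonzero Frobenius-torsion class and hence a kernel element,'' but the class $\mb x^{\mb v}+k[A]$ lives in $Q$, not in any local cohomology module, and it is in general not $\mfm$-torsion, so it does not lie in $H^0_\mfm(Q)$ and the long exact sequence over $\mfm$ cannot transport it anywhere. Concretely, in \Cref{ex: 2 different weak normalizations} the vector $\mb v=(1,0,0)$ lies in ${}^{*2}A\setminus A$, yet $\mb v+N(0,0,3)=(1,0,3N)\notin A$ for every $N$, so no power of $\mfm$ kills the class and $H^0_\mfm(Q)$ never sees it. The repair is to localize at a prime $P$ minimal over $\operatorname{Ann}_{k[A]}(\mb x^{\mb v}+k[A])$, so that the class becomes a nonzero element of $H^0_P(Q_P)$; then, since $k[A]_P$ is a domain and $k[\overline{A}]_P$ is a torsion-free $k[A]_P$-module, the long exact sequence gives a Frobenius-equivariant injection $H^0_P(Q_P)\hookrightarrow H^1_P(k[A]_P)$, and the last nonzero Frobenius iterate of the image is a nonzero kernel element, contradicting $F$-injectivity at $P$ (which is imposed at every prime by \Cref{defn: F-inj}). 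This is all fixable, but it amounts to reproving the Schwede/Datta--Murayama theorem rather than obtaining it as a soft consequence of your setup. One smaller imprecision in the $F$-nilpotent half: $F$-splitness of $k[\overline{A}]$ only controls the local cohomology below the top degree; to handle $0^*_{H^d_\mfm(k[A])}$ you need $0^*_{H^d_\mfm(k[\overline{A}])}=0$, i.e.\ $F$-rationality of the normalization, which Hochster's direct summand theorem does supply but which is strictly stronger than the $F$-splitting you invoke.
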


Both the $F$-injective and $F$-nilpotent conditions are strongly connected to the Frobenius closure of ideals. Given an ideal $I$ in a ring $R$ of prime characteristic, the Frobenius closure $I^F$ is the set of elements $x \in R$ such that $F^e(x) \in F^e(I)R$. When $R$ is Noetherian, for each $I$ there must be an $e_0$ such that $F^{e_0}(I^F)R = F^{e_0}(I)R$, which is called the Frobenius test exponent $\fte I$ of $I$ (see \Cref{subsec: fsings}). In the introduction of \cite{Katzman-Sharp}, M. Katzman and R. Sharp ask whether the set $\{\fte I \mid I \subset R\}$ is bounded, and  from a computational perspective it is clearly desirable to obtain a uniform upper bound on $\fte I$ over all ideals $I$. 

However, in \cite{Brenner}, H. Brenner exhibited a family of standard graded normal domains which have the property that no uniform upper bound on $\fte I$ can exist. We show that affine semigroup rings are not susceptible to Brenner's problem. 

\begin{mainthm}[\Cref{thm: affine semigroup rings have uniformly bounded fte}]
Let $A$ be an affine semigroup and let $k$ be a field of prime characteristic $p>0$. Further, let $N_0$ be minimal so that $p^{N_0}(\wkp{A}) \subset A$. Then, for any ideal $I\subset k[A]$, $\fte I\le N_0$.
\end{mainthm}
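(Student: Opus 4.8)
The plan is to route everything through the intermediate affine semigroup ring $R' := k[\wkp{A}]$, sitting inside $R := k[A] \subseteq R' \subseteq \overline{R} := k[\overline{A}]$, and to isolate the single containment
\[
   I^F \subseteq I R' \qquad \text{for every ideal } I \subseteq R
\]
as the heart of the matter. Two structural observations drive the argument. First, since the Frobenius endomorphism is additive in characteristic $p$ and $p^{N_0}(\wkp{A}) \subseteq A$, every $p^{N_0}$-th power of an element of $R'$ already lands in $R$: if $\delta = \sum_{\mb{v} \in \wkp{A}} b_{\mb{v}} t^{\mb{v}} \in R'$, then $\delta^{p^{N_0}} = \sum_{\mb{v}} b_{\mb{v}}^{p^{N_0}} t^{p^{N_0}\mb{v}} \in k[A] = R$, because each $p^{N_0}\mb{v} \in A$. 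Second, the normal toric ring $\overline{R}$ is $F$-split through the explicit monomial (Frobenius-semilinear) splitting, and I will track carefully \emph{which} monomials this splitting produces.

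Granting the containment $I^F \subseteq I R'$, the bound is immediate. Take any $x \in I^F$ and write $I = (f_1,\dots,f_r)$. By the containment we may write $x = \sum_i f_i \delta_i$ with $\delta_i \in R'$, and then, using additivity of Frobenius together with the first observation,
\[
   x^{p^{N_0}} = \sum_i f_i^{p^{N_0}}\, \delta_i^{p^{N_0}} \in \bigl(f_1^{p^{N_0}},\dots,f_r^{p^{N_0}}\bigr)R = I^{[p^{N_0}]},
\]
since each $\delta_i^{p^{N_0}} \in R$. Applying this to a generating set of $I^F$ gives $(I^F)^{[p^{N_0}]} \subseteq I^{[p^{N_0}]}$; the reverse inclusion is automatic, so $(I^F)^{[p^{N_0}]} = I^{[p^{N_0}]}$ and hence $\fte I \le N_0$.

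It remains to prove $I^F \subseteq I R'$, which is where the difficulty of \emph{non-monomial} ideals is confronted and resolved. Let $x \in I^F$, so $x^{p^e} \in I^{[p^e]}$ for some $e \in \N$, say $x^{p^e} = \sum_i a_i f_i^{p^e}$ with $a_i \in R$. Let $\phi_e \colon \overline{R} \to \overline{R}$ be the monomial $F^e$-splitting determined by $\phi_e(t^{\mb{u}}) = t^{\mb{u}/p^e}$ when $\mb{u} \in p^e\overline{A}$ and $\phi_e(t^{\mb{u}}) = 0$ otherwise (semilinear on coefficients). Since $\phi_e$ is $(\overline{R})^{[p^e]}$-linear and splits the $e$-th Frobenius, applying it to the displayed relation yields $x = \phi_e(x^{p^e}) = \sum_i f_i\, \phi_e(a_i)$. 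The crucial point is the support of each $\phi_e(a_i)$: writing $a_i = \sum_{\mb{u} \in A} c_{\mb{u}} t^{\mb{u}}$, the surviving exponents are $\mb{v} = \mb{u}/p^e$ with $\mb{v} \in \overline{A}$ and $p^e \mb{v} = \mb{u} \in A$, which is precisely the defining condition for $\mb{v} \in \wkp{A}$. Hence $\phi_e(a_i) \in k[\wkp{A}] = R'$, and therefore $x \in I R'$.

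The main obstacle is thus exactly the passage from the transparent monomial-ideal computation (where $I^F$ is monomial and membership in $I R'$ is read off directly from $\wkp{A}$) to arbitrary ideals; the $F$-splitting $\phi_e$ is what bridges this gap, and the essential miracle is that its image exponents automatically satisfy the $p$-power condition cutting out $\wkp{A}$ rather than merely the weaker saturation condition cutting out $\overline{A}$. The only technical caveat is the coefficient-semilinearity of $\phi_e$, which is harmless: working over a perfect closure $k^{1/p^\infty} \supseteq k$ makes $\phi_e$ an honest splitting and produces $\delta_i$ with coefficients in some $k^{1/p^e}$, after which the existence of a representation $x = \sum_i f_i \delta_i$ with $\delta_i \in k[\wkp{A}]$ is a $k$-linear solvability condition that descends from the faithfully flat extension $k^{1/p^e} \supseteq k$.
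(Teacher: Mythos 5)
Your proof is correct, and it shares the paper's overall skeleton: both arguments route through $R' = k[\wkp{A}]$, establish the key containment $I^F \subseteq IR'$, and then finish identically by raising a representation $x = \sum_i f_i\delta_i$ with $\delta_i \in R'$ to the $p^{N_0}$-th power, using additivity of Frobenius and $p^{N_0}(\wkp{A}) \subseteq A$ to see $\delta_i^{p^{N_0}} \in k[A]$. Where you genuinely diverge is in how the containment $I^F \subseteq IR'$ is obtained. The paper black-boxes it: $k[\wkp{A}]$ is $F$-pure by the cited Bruns--Li--R\"omer result (quoted as \Cref{BLRmain}), so every ideal of $R'$ is Frobenius closed, and persistence of Frobenius closure gives $x \in I^F \subseteq (IR')^F = IR'$. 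You instead prove the containment from scratch: you write down Hochster's explicit monomial $F^e$-splitting $\phi_e$ on the normalization $\overline{R}$ (well-defined precisely because $\overline{A}$ is saturated), apply it to the relation $x^{p^e} = \sum_i a_i f_i^{p^e}$, and observe that the monomials surviving in $\phi_e(a_i)$ have exponents $\mb v \in \overline{A}$ with $p^e\mb v \in A$ --- exactly the defining condition of $\wkp{A}$ --- so $\phi_e(a_i) \in R'$. Your handling of the coefficient issue is the one delicate point and is done correctly: semilinearity forces you through a perfect closure, and faithful flatness of the field extension gives $(f_1,\ldots,f_r)K[\wkp{A}] \cap k[\wkp{A}] = (f_1,\ldots,f_r)k[\wkp{A}]$, so the representation descends to $k$. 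The trade-off: the paper's route is shorter but leans on the nontrivial cited equivalence between $F$-purity and (semi/weak) normality conditions for affine semigroup rings, while yours is self-contained, makes transparent why the image of the splitting satisfies the $p$-power condition cutting out $\wkp{A}$ rather than merely the saturation condition cutting out $\overline{A}$, and in effect reproves the needed instance of $F$-purity of $k[\wkp{A}]$ along the way --- at the cost of the descent detour for imperfect $k$.
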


To our knowledge, this result presents affine semigroup rings as the largest class of rings which have a positive answer to the question of Katzman-Sharp. A strong benefit to the computational approach in our article is that our algorithm to calculate $\wkp{A}$ also provides the uniform upper bound on $\fte I$ for ideals $I\subset k[A]$. We also provide an example (see \Cref{ex: 2 different weak normalizations part 3}) to show that, over $k=\mathbb{F}_p$, one can effectively compute Frobenius closures of any ideal in an affine semigroup ring $k[A]$ with our techniques. \\

\noindent\textbf{Acknowledgments:} We are very grateful to Kevin Tucker for numerous helpful conversations. Further, we want to thank Daniel Hader and Juan Ignacio Garc\'ia-Garc\'ia for discussions around implementing Algorithms \ref{alg1} and \ref{alg2}.

\section{Preliminaries}

Throughout this article, we assume all rings are Noetherian and excellent. For a reduced ring $R$, we will always use $\overline{R}$ to denote the normalization of $R$ in its total ring of quotients. Given a vector $\mb a = (a_1,\ldots,a_n) \in \mathbb{N}^n$, we will write $\mb x^{\mb a}$ for the monomial $x_1^{a_1}\cdots x_n^{a_n}$ in an ambient polynomial ring $R[x_1,\ldots,x_n]$.

\subsection{Affine semigroups}\label{subsec: affine semigroups}

In this subsection, we recall the concepts of affine semigroups, their quotients, and several related ideas from convex geometry, all of which play a role in describing the weak normalization of affine semigroup rings. Recall an \textbf{affine semigroup} is a finitely-generated sub-monoid of the additive monoid $\N^n$ for some positive integer $n$. 

Let $k$ be a field, and $A$ an affine semigroup generated by $\{\bf a_1,..., \bf a_s\} \subseteq \N^n$. The associated semigroup ring $k[A]$ is isomorphic to the $k$-algebra $k[x_1,...,x_s]/I$ where $I$ is the kernel of the ring homomorphism $\phi : k[x_1,\ldots,x_s] \to k[u_1,\ldots,u_n]$ defined by $x_i \mapsto \bf u^{\bf a_i}$ for $1 \leq i \leq s$. Since affine semigroup rings are finite type over $k$, they are both Noetherian and excellent.

An affine semigroup defines a convex rational polyhedral cone in a natural way, and therefore, techniques from convex geometry can be applied to study them. We will briefly recall these notions and direct the interested reader to Section 2 of \cite{Bruns2006} for further discussion. Denote by $G(A)$ the group generated by $\{\bf a_1,..., \bf a_s\}\subset \mathbb{Z}^n$, and $C(A) =\{r_1 {\bf a_1} +... r_s {\bf a_s} \mid r_1,...,r_s \in \R_{\geq 0} \}$ the cone of $A$. A face of $A$ is $F=\{r_1 {\bf a_{i_1}}+...+ r_u {\bf a_{i_u}} \mid r_1,...,r_u \in \R_{\geq 0}\}$ for some $\{i_1,...,i_u\} \subset \{1,...,s\}$. A vector $\mathbf{x}$ is in the interior $\int F$ of $F$ if $\mathbf x = r_1 {\bf a_{i_1}}+...+ r_u {\bf a_{i_u}}$ where $r_1,...,r_u \in \R_{> 0}$, which agrees with the usual topological interior on the induced subspace topology inherited from $\R^n$.

\begin{defn} \label{defn: saturation}
    Let $A$ be an affine semigroup. The \textbf{saturation} or \textbf{normalization} of $A$ is \[\overline{A} = \{\mathbf{v} \in G(A) \mid m \mathbf v \in A \text{ for some } m \in \N\} = G(A)\cap C(A).\]
\end{defn}

We end this subsection with the quotient of a semigroup by a natural number (see \cite{Onthequotient}), which will play a crucial role in the geometric description of the weak normalization of an affine semigroup. Note that the nomenclature ``quotient" is evocative of fractions rather than an algebraic quotient of a semigroup by a semigroup ideal.

\begin{defn}\label{defn: quotient}
    Let $A$ be an affine semigroup and fix $d \in \mathbb{N}$. The \textbf{quotient of $A$ by $d$} is the affine semigroup $A/d=\{\mathbf v \in \N^n \mid d \mbv \in A\}.$ We will be more interested in $\frac{A}{d}\cap G(A) = \{ \mb v \in \overline{A} \mid d\mb v \in A\}$, which we will write as $\quot{A/d}.$
\end{defn}

Notably, $\quot{A/d}\subset \overline{A}$ is itself an affine semigroup due to \cite[Cor.~2.10]{BG09}, since the cones of $A$ and $\quot{A/d}$ are the same. 

\subsection{Purely inseparable extensions and weak normalization}

As we define our $p$-weak normalization for affine semigroups, it is valuable to recall the algebraic situation. In this subsection, rings will be reduced and of prime characteristic $p>0$.

\begin{defn}
    An extension (inclusion) of rings $R\rightarrow S$ is \textbf{purely inseparable} if, as for fields, for each $s \in S$ there is an $e \in \mathbb{N}$ such that $s^{p^e}\in R$. Let $Q$ be the total ring of quotients of $R$; the largest purely inseparable extension of $R$ inside $Q$ is called the \textbf{weak normalization} of $R$, denoted $\wk{R}$. \[
    \wk{R} = \left\lbrace r \in Q \mid \text{there is an } e \in \N \text{ such that } r^{p^e} \in R\right\rbrace.
    \] The ring $R$ is \textbf{weakly normal} if $R=\wk{R}$.
\end{defn}

\begin{rmk}\label{rmk: normalization factorization diagram}
    Let $R$ be a reduced ring of prime characteristic $p>0$. We have a commutative diagram of ring inclusions \vspace{-.3in}\begin{center}\begin{equation}\label{wk normalization diagram}
        \begin{tikzcd}
           R \arrow{rr}{\iota}\arrow{dr}[swap]{\phi} & \, & \overline{R} \\
           \, & \wk{R}\arrow{ur}[swap]{\psi} &\,
        \end{tikzcd}\end{equation}
    \end{center} where $\iota$ is the usual normalization map, $\phi$ is purely inseparable, and $\psi$ is separable. Furthermore, by our initial assumption that $R$ is excellent, we have that each of these maps is a module-finite extension. Finally, since $R\rightarrow \wk{R}$ is module-finite and purely inseparable, there is an $e_0\in \N$ such that $r^{p^{e_0}}\in R$ for all $r \in \wk{R}$.
\end{rmk}

Our primary goal in this paper is to describe the weak normalization of affine semigroup rings in such a way as to be amenable to computation. A related notion of \textit{seminormalization} has been extensively studied before; see the excellent survey by M. Vitulli \cite{Vitulli2011} for details on the seminormalization and weak normalization in general. In the specific context of affine semigroup rings, works of Reid-Roberts \cite{Reid2001} and Bruns-Li-R\"omer \cite{Bruns2006} were particularly important as sources of inspiration for this article. 

\subsection{A brief primer on \texorpdfstring{$F$-}{F-}singularities}\label{subsec: fsings}

Throughout this subsection, all rings are of prime characteristic $p>0$. For such a ring $R$, the properties of the Frobenius endomorphism $F:R\rightarrow R$ defined $F(r)=r^p$ govern an extraordinary amount of information about $R$. In particular, Kunz famously showed in \cite{Kunz} that a Noetherian ring $R$ is regular if and only if $F:R \rightarrow R$ is flat. This justifies the study of singularities in prime characteristic as the study of the Frobenius map, and the use of the phrase $F$-singularities for singularities in prime characteristic. For more details on $F$-singularities in general, we direct the reader to  surveys by Schwede-Smith \cite{SchwedeSmith} and Ma-Polstra \cite{MaPolstra}.

We will focus on two particular $F$-singularities, both defined in terms of the Frobenius map on local cohomology; $F$-injective and $F$-nilpotent. For readers unfamiliar with local cohomology, we recommend Brunz-Herzog's book \cite{BH} and \textit{24 Hours of Local Cohomology} \cite{24hours}. For what follows, we will write $R^\circ$ for $R\setminus \bigcup_{\mathfrak{p} \in \operatorname{Min}(R)} \mathfrak{p}$, and if $R$ is a domain, note $R^\circ = R\setminus \{0\}$. 

\begin{defn}
Let $(R,\mfm)$ be a local ring. The map $F:R\rightarrow R$ induces a map on the local cohomology modules $H^j_\mfm(R)$, which is referred to as the \textbf{Frobenius map on local cohomology}, also called a \textbf{Frobenius action}. This map is not $R$-linear but $p$-linear, in that $F(r\xi) = r^pF(\xi)$ for any $r \in R$ and $\xi \in H^j_\mfm(R)$.
\end{defn}

For a detailed discussion of the Frobenius action on $H^j_\mfm(R)$, see \cite[2.1]{Katzman-Sharp}. Like the Frobenius map itself, this Frobenius action on local cohomology can be used to study the singularities of a local ring. One important classical example of a singularity defined in terms of this action is $F$-injective singularities, which were defined by R. Fedder in the 1980's.

\begin{defn} \label{defn: F-inj}
Let $(R,\mfm)$ be a local ring of dimension $d$. Then, $R$ is \textbf{$F$-injective} if, for all $0 \le j \le d$, the Frobenius action $F:H^j_\mfm(R)\rightarrow H^j_\mfm(R)$ is injective. A non-local ring $R$ is $F$-injective if $R_\mathfrak{p}$ is an $F$-injective local ring for all $\mathfrak{p} \in \operatorname{Spec}(R)$
\end{defn}

Opposite to $F$-injective singularities are those for which the Frobenius action is nilpotent. Some iterate of Frobenius may vanish on the lower local cohomology modules but $F^e(H^d_\mfm(R))$ never vanishes for any $e$, see \cite[Lem.~4.2]{Lyubeznik_2006}. However, the largest submodule of $H^d_\mfm(R)$ on which some iterate $F^e$ of the Frobenius action vanishes is always contained in the \textit{tight closure of $0$} in $H^d_\mfm(R)$, defined by $0^*_{H^d_\mfm(R)} = \left\lbrace \left. \xi \in H^d_\mfm(R) \right| \text{ for some } c \neq 0 \text{ and all } e\gg 0, cF^e(\xi)=0\right\rbrace .$ It is well-known that $0^*_{H^d_\mfm(R)}$ is stable under the Frobenius action. Clearly, $0^*_{H^d_\mfm(R)}$ contains the submodule $0^F_{H^d_\mfm(R)} = \left\lbrace \left. \xi \in H^d_\mfm(R) \right| F^e(\xi) = 0 \text{ for some } e \in \mathbb{N} \right\rbrace,$ which is called the \textit{Frobenius closure of $0$} in $H^d_\mfm(R)$. See \cite[Rmk.~2.6]{PQ19} for an in-depth discussion on the comparison of $0^F_{H^d_\mfm(R)}$ and $0^*_{H^d_\mfm(R)}$.

\begin{defn}\label{defn: F-nil}
Let $(R,\mfm)$ be a local ring of dimension $d$. Then, $R$ is \textbf{$F$-nilpotent} if, for all $0 \le j < d$, the Frobenius action $F:H^j_\mfm(R)\rightarrow H^j_\mfm(R)$ is nilpotent, and further, that $F:0^*_{H^d_\mfm(R)}\rightarrow 0^*_{H^d_\mfm(R)}$ is also nilpotent. A non-local ring $R$ is $F$-nilpotent if $R_\mathfrak{p}$ is an $F$-nilpotent local ring for all $\mathfrak{p} \in \operatorname{Spec}(R)$
\end{defn}

The class of $F$-nilpotent rings was first considered much more recently than $F$-injective rings; it was introduced by this name in work of Srinivas-Takagi \cite{ST17}, although it was studied by Blickle-Bondu in \cite{BB05} under the name \textit{close to $F$-rational}.\footnote{$F$-rational singularities are another classical and interesting kind of $F$-singularities -- in fact, an excellent local ring is $F$-rational if and only if it is both $F$-injective and $F$-nilpotent. However, an excellent $F$-rational ring is locally a normal domain, and as we intend to study exclusively non-normal affine semigroup rings in this article, we will avoid discussing $F$-rationality here.} 

Due to the inherent difficulty in computing properties of local cohomology modules, in general checking whether a specific ring has either of these singularity types is also exceptionally difficult. However, for affine semigroup rings, these conditions are both much more readily checkable, as they only depend on computationally amenable properties of the normalization map; see \Cref{cor: wkNiffFinj} and \Cref{cor: weak normalization equals normalization if and only if F-nilpotent} for more details.

Alongside the Frobenius action on local cohomology, $F:R\rightarrow R$ can be used to define singularities in terms of ideal closures. 

\begin{defn}
Let $R$ be a ring of prime characteristic $p>0$ and let $I\subset R$ be an ideal. Then, $I^{\fbp{p^e}}$ is defined to be the ideal generated by $F^e(I)$, that is, if $I=(x_1,\ldots,x_t)$ then $I^{\fbp{p^e}} = (x_1^{p^e},\ldots,x_t^{p^e})$. Then, the \textbf{Frobenius closure of $I$} is the ideal \[ I^F = \left\lbrace x \in R \left| x^{p^e} \in I^{\fbp{p^e}} \text{ for some } e \in\N\right.\right\rbrace .\] The \textbf{Frobenius test exponent of $I$}, written $\fte I$, is the least exponent $e$ such that $(I^F)^{\fbp{p^e}} = I^{\fbp{p^e}}$. 
\end{defn}

For computational purposes, one may hope for an effective upper bound on $\fte I$ independent of $I$. However, Brenner showed in \cite{Brenner} that this is not possible even in nice rings of low dimension. However, around the same time Katzman-Sharp showed in \cite{Katzman-Sharp} that there was a uniform upper bound on the Frobenius test exponents over all parameter ideals for Cohen-Macaulay rings using the Frobenius action on local cohomology. Consequently, for a local ring $R$, we write $\fte R$ for the uniform upper bound on $\fte \mathfrak{q}$ over all ideals generated by systems of parameters $\mathfrak{q}$, if such an upper bound exists. For a ring standard graded over a field, we write $\fte^* R$ for the uniform upper bound on the Frobenius test exponent of homogeneous parameter ideals. 

To our knowledge, no example of a ring with an infinite Frobenius test exponent has been exhibited to date. In general, it is an interesting open question to determine the largest class of rings for which $\fte R$ is finite. For a survey of the history of the Frobenius test exponent problem, see \cite[Sec.~1]{Maddox}. We will show in \Cref{thm: affine semigroup rings have uniformly bounded fte} that all affine semigroup rings have finite Frobenius test exponent, and even a uniform Frobenius test exponent over all ideals! This shows that affine semigroup rings avoid the problem raised by Brenner.

Finally, the Frobenius closure operation is closely related to $F$-injectivity. In particular, if all parameter ideals of $R$ are Frobenius closed (that is, $\fte R = 0$), then $R$ is $F$-injective. The converse does not hold, however; see \cite{QS} for a discussion on the gap between $F$-injectivity and $\fte R = 0$, which is referred to as \textit{parameter $F$-closed} in \textit{loc. cit}. Stronger than $F$-injectivity is \textit{$F$-purity}, which in our context is equivalent to all ideals being Frobenius closed, i.e. $\fte I = 0$ for all $I$. Although not equivalent in general, by \cite[Prop.~6.2]{Bruns2006}, $F$-injective and $F$-pure are equivalent for affine semigroup rings.

\section{The \texorpdfstring{$p$-}{p-}weak normalization of an affine semigroup}

We begin by recalling the seminormalization of an affine semigroup. While the original definition is given in terms of the intersection of seminormal sub-monoids, we present an equivalent formulation that is better suited for our purposes.

\begin{defn}\label{defn: seminormalization} (Exercise 2.11, \cite{BG09})
    Let $A\subset \mathbb{N}^n$ be an affine semigroup and let $\overline{A}$ be its saturation in $\mathbb{N}^n$. The \textbf{seminormalization} of an affine semigroup is the affine semigroup defined by \[
    ^{+}A = \{ \mathbf{v} \in G(A) \mid m\mathbf{v} \in A \text{ and } m' \mathbf{v} \in A \text{ for some coprime } m,m'\in \mathbb{N}\}.
    \] The affine semigroup $A$ is \textbf{seminormal} if $A=\,^{+}A$.
\end{defn}

The seminormalization of an affine semigroup is related to the seminormalization of its corresponding affine semigroup ring. Further, the seminormalization of an affine semigroup has a useful geometric description given by Reid-Roberts in \cite{Reid2001}, which we recall below.

\begin{equation}\label{geometric+M}
    ^+ A = \bigcup_{\substack{F \text{ is a} \\\text{face of } C(A)}} G(A\cap F) \cap \int(F).
\end{equation}

We now turn to our definition of the $p$-weak normalization of an affine semigroup. 

\begin{defn}\label{defn: p-weak normalization}
    Let $A\subset \mathbb{N}^n$ be an affine semigroup. For a prime number $p$, define the \textbf{$p$-weak normalization of $A$} to be the set \[
    \wkp{A} = \left\lbrace \left. \mb v \in \overline{A} \,\right|\, p^e \mb v \in A \text{ for some } e \in \N\right\rbrace.\] For a fixed prime $p$, we will call $A$ \textbf{$p$-weakly normal} if $\wkp{A}=A$.
\end{defn}

Unlike the normalization or the seminormalization of an affine semigroup, our definition of the weak normalization necessarily incorporates a fixed prime number. We note that the $p$-weak normalization of an affine semigroup is an affine semigroup, and fits in between the seminormalization and the normalization of $A$.

\begin{prop}
    Let $A$ be an affine semigroup. Then, for any prime number $p$, $\wkp{A}$ is an affine semigroup with $^{+}A \subset \wkp{A} \subset \overline{A}.$ Further, for any two distinct primes $p$ and $q$, we have $^{+}A = \wkp{A} \cap \wkq{A}.$
\end{prop}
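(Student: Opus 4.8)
The plan is to prove the three assertions in sequence, each of which reduces to an elementary manipulation of the divisibility conditions defining the various semigroups. First I would verify that $\wkp{A}$ is an affine semigroup. Since $\wkp{A} \subset \overline{A}$ and $\overline{A}$ is already known to be an affine semigroup (\Cref{defn: saturation}), it suffices to show $\wkp{A}$ is closed under addition and then invoke the same finite-generation machinery used for $\quot{A/d}$; indeed, the key observation is that $\wkp{A} = \bigcup_{e \in \N} \quot{A/p^e}$, an ascending union of the affine semigroups $\quot{A/p^e}$ whose common cone is $C(A)$. Given $\mb v, \mb w \in \wkp{A}$ with $p^{e_1}\mb v \in A$ and $p^{e_2}\mb w \in A$, taking $e = \max(e_1,e_2)$ gives $p^e(\mb v + \mb w) \in A$, so $\mb v + \mb w \in \wkp{A}$; closure under addition is immediate. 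For finite generation, I would argue that this ascending chain stabilizes: each $\quot{A/p^e}$ sits inside the fixed normal semigroup $\overline{A}$ with the same cone, and the Noetherianity of $\overline{A}$ (or \cite[Cor.~2.10]{BG09}) forces the union to equal $\quot{A/p^{N_0}}$ for some $N_0$, which is an affine semigroup.

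Next I would establish the containments $^{+}A \subset \wkp{A} \subset \overline{A}$. The right-hand containment is definitional: any $\mb v \in \wkp{A}$ lies in $\overline{A}$ by construction. For the left-hand containment, take $\mb v \in {}^{+}A$, so there are coprime $m, m' \in \N$ with $m\mb v, m'\mb v \in A$. Since $\gcd(m,m') = 1$, at least one of $m, m'$ is coprime to $p$; however, coprimality to $p$ alone does not immediately give a power of $p$, so the correct approach is to use that $A$ is a semigroup together with a B\'ezout-type argument. Writing $1 = am + bm'$ is not available in $\N$, so instead I would observe that from $m\mb v \in A$ and $m'\mb v \in A$ one gets $(am + bm')\mb v \in A$ for all $a,b \in \N$, i.e. every sufficiently large integer combination lands in the numerical semigroup generated by $m$ and $m'$; since $\gcd(m,m')=1$, this numerical semigroup contains all integers beyond the Frobenius number, in particular it contains some power $p^e$. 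Hence $p^e \mb v \in A$ and $\mb v \in \wkp{A}$.

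For the final equality $^{+}A = \wkp{A} \cap \wkq{A}$ with $p \neq q$ distinct primes, the containment $^{+}A \subset \wkp{A} \cap \wkq{A}$ follows from the previous paragraph applied to both primes. For the reverse, suppose $\mb v \in \wkp{A} \cap \wkq{A}$, so $p^e \mb v \in A$ and $q^f \mb v \in A$ for some $e, f \in \N$. Since $p \neq q$ are distinct primes, $\gcd(p^e, q^f) = 1$, and these two multiples witness exactly the coprimality condition of \Cref{defn: seminormalization}, giving $\mb v \in {}^{+}A$.

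I expect the main obstacle to be the left-hand containment $^{+}A \subset \wkp{A}$ in the second assertion: the seminormalization condition only guarantees coprime multiples $m, m'$, not a power of $p$, so one must convert the B\'ezout/Frobenius-number information into the existence of a power of $p$ lying in the numerical semigroup $\langle m, m'\rangle$. The subtlety is that working in $\N$ rather than $\Z$ means one cannot directly solve $am + bm' = p^e$; instead one uses that $\langle m, m'\rangle$ is cofinite in $\N$, so it contains $p^e$ for all large $e$, and that $k\mb v \in A$ whenever $k \in \langle m, m'\rangle$ by additivity. The remaining steps are routine divisibility bookkeeping.
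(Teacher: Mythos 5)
Your closure-under-addition argument (taking the maximum of the two exponents), your proof of $^{+}A \subset \wkp{A}$ via cofiniteness of the numerical semigroup $\langle m, m'\rangle$ in $\N$, and your proof of $^{+}A = \wkp{A}\cap\wkq{A}$ via $\gcd(p^e,q^f)=1$ are exactly the arguments in the paper. The one place you diverge is the finite generation of $\wkp{A}$, and that step as written has a genuine gap.

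Your identity $\wkp{A} = \bigcup_{e\in\N}\quot{A/p^e}$ is correct, and the chain is indeed ascending; but you then assert that ``the Noetherianity of $\overline{A}$ (or \cite[Cor.~2.10]{BG09}) forces'' the chain to stabilize. Neither justification does this work. There is no ascending chain condition for finitely generated submonoids of a fixed affine monoid: in $\N^2$, the monoids $M_e = \langle (1,0),(1,1),\ldots,(1,e)\rangle$ form an infinite strictly ascending chain of affine semigroups whose union is not finitely generated. So ``Noetherianity'' of the ambient monoid is not a usable principle, and the cited corollary, as the paper invokes it after \Cref{defn: quotient}, yields only that each individual term $\quot{A/p^e}$ is affine, not that the chain stops. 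What saves your chain is precisely its special features --- every term contains $A$, sits inside $\overline{A}$, and has cone equal to $C(A)$, with $\overline{A}$ module-finite over $A$ --- and those features must actually be invoked through some mechanism. Beware also of circularity: in the paper, the stabilization $\wkp{A}=\quot{A/p^{N_0}}$ (\Cref{rmk: finite pi index}) is \emph{deduced from} the finite generation of $\wkp{A}$, so it cannot be assumed when proving finite generation.

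Two repairs are available. The paper's route passes to semigroup rings: $k[A]\subset k[\wkp{A}]\subset k[\,\overline{A}\,]=\overline{k[A]}$, where excellence of $k[A]$ makes $\overline{k[A]}$ a module-finite extension of the Noetherian ring $k[A]$; hence $k[\wkp{A}]$ is a module-finite $k[A]$-module, and $\wkp{A}$ is a finitely generated monoid by \cite[Thm.~7.7]{Gilmer}. (This same argument, applied to the chain of rings $k[\quot{A/p^e}]$ inside the Noetherian $k[A]$-module $\overline{k[A]}$, is also the honest way to prove your stabilization claim.) Alternatively, if the hypotheses of \cite[Cor.~2.10]{BG09} are really ``submonoid of an affine monoid with the same finitely generated cone,'' as the paper's usage for $\quot{A/d}$ suggests, then that corollary applies verbatim to the union $\wkp{A}$ itself --- it is a submonoid of $\overline{A}$ containing $A$ with $C(\wkp{A})=C(A)$ --- and the whole chain-and-stabilize detour becomes unnecessary. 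With either repair in place, the rest of your proposal stands as written.
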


\begin{proof}
    For notational convenience, set $B=\wkp{A}$. Clearly $\mb 0 \in B$, and if $\mb v, \mb w \in B$, then $p^{e'} \mb v \in A$ and $p^{e''}\mb w \in A$ for some $e', e''$ in $\N$. Letting $e=\max\{e',e''\}$, we see $p^e(\mb v + \mb w) \in A$, showing $B$ is closed under addition. 
    
    Now note that $B$ is finitely generated. To see this, let $k$ be any field, and we have the chain of inclusions \[
    k[A] \subset k[B]\subset k\left[\,  \overline{A}\,\right] = \overline{k[A]}
    \] where $k[A]$ and $\overline{k[A]}$ are Noetherian rings, and $k[A]\subset \overline{k[A]}$ is a module-finite extension (since $k[A]$ is excellent). Thus, $k[A]\subset k[B]$ is also module-finite, which implies $B$ is finitely generated as a monoid by \cite[Thm.~7.7]{Gilmer}. 

    To see that $^{+}A\subset \wkp{A}$, let $\mb v \in \,^{+}A$ with $m\mbv\in A$ and $n \mbv \in A$ for some coprime $n,m \in \mathbb{N}$. Then, there exists some $k$ such that for all $j \ge k$, $j = a_j n+ b_j m$ for some $a_j, b_j \in \mathbb{N}$ (since $n$ and $m$ generate a numerical semigroup in $\mathbb{N}$). Then, we may take $j=p^\ell =an+bm$ for some $\ell \gg 0$, which gives that $p^\ell\mb v = (an+bm)\mb v = a(n\mbv)+b(m\mb v) \in A$. This shows $\mb v \in \wkp{A}$. 
    
    Finally, if $p$ and $q$ are distinct primes and $\mb v \in \wkp{A} \cap \wkq{A}$, then $p^e\mb v \in A$ and $q^f \mb v \in A$, but $\gcd(p^e,q^f)=1$ so $\mb v\in \,^{+}A$. Of course, $^{+}A \subset \wkp{A}\cap \wkq{A}$, so this shows $^{+}A=\wkp{A}\cap \wkq{A}$.
\end{proof}

\begin{rmk}\label{rmk: finite pi index}
Notably, since $^{*p}A$ is an affine semigroup, it has a finite set of generators; say $^{*p}A$ is generated by the set $\{\mbv_1,\cdots,\mbv_t\}$. Then, for each $\mbv_i$ there is an $N_i$ such that $p^{N_i}\mbv_i\in A$. Letting $N_0 = \max\{N_i\}$, we have $p^{N_0}(^{*p}A) \subset A$. In \Cref{alg2}, we provide an algorithm which computes both $\wkp{A}$ and $N_0$.
\end{rmk}

The reason to consider the $p$-weak normalization of an affine semigroup is that it captures the combinatorial information of the weak normalization in characteristic $p$, as the following proposition illustrates.

\begin{prop}\label{prop: p-weak normalization gives weak normalization}
    Let $A$ be an affine semigroup and $p>0$ a prime integer. Then, if $k$ is a field of characteristic $p$, we have that $\wk{k[A]} = k[\wkp{A}]$.
\end{prop}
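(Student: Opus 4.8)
The plan is to prove the two inclusions $k[\wkp{A}] \subseteq \wk{k[A]}$ and $\wk{k[A]} \subseteq k[\wkp{A}]$ separately, exploiting the fact that $k[A]$ is a $G(A)$-graded (equivalently $\mathbb{Z}^n$-graded) ring, so that its normalization $\overline{k[A]} = k[\overline{A}]$ inherits this grading, and the Frobenius map respects it by sending $\mb x^{\mb v} \mapsto \mb x^{p\mb v}$. The key structural observation is that since $k[A]$ is a reduced ring (it is a domain, being a subring of a polynomial ring), its total ring of quotients sits inside the field of fractions of $k[\overline{A}]$, and every element of $\wk{k[A]}$ must actually lie in $\overline{k[A]} = k[\overline{A}]$ because the weak normalization is contained in the normalization by \Cref{rmk: normalization factorization diagram}.

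For the forward inclusion $k[\wkp{A}] \subseteq \wk{k[A]}$, I would take a generator $\mb x^{\mb v}$ with $\mb v \in \wkp{A}$, so by definition $p^e \mb v \in A$ for some $e$. Then $(\mb x^{\mb v})^{p^e} = \mb x^{p^e \mb v} \in k[A]$, which is precisely the condition for $\mb x^{\mb v}$ to lie in $\wk{k[A]}$; since $\wk{k[A]}$ is a ring containing $k$ and closed under the $k$-algebra operations, all of $k[\wkp{A}]$ lands inside it. For the reverse inclusion, I would take an arbitrary $r \in \wk{k[A]} \subseteq k[\overline{A}]$ and write it in its $\mathbb{Z}^n$-graded decomposition $r = \sum_{\mb v} c_{\mb v} \mb x^{\mb v}$ with $\mb v \in \overline{A}$. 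The defining property gives $r^{p^e} \in k[A]$ for some $e$, and in characteristic $p$ we have the crucial simplification $r^{p^e} = \sum_{\mb v} c_{\mb v}^{p^e} \mb x^{p^e \mb v}$ (the Frobenius is additive). Comparing graded pieces, each monomial $\mb x^{p^e \mb v}$ appearing with nonzero coefficient must be an element of $k[A]$, i.e. $p^e \mb v \in A$, which forces $\mb v \in \wkp{A}$ and hence $r \in k[\wkp{A}]$.

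The main obstacle I anticipate is the grading bookkeeping in the reverse inclusion: I must be careful that the Frobenius-twisted decomposition $r^{p^e} = \sum c_{\mb v}^{p^e}\mb x^{p^e \mb v}$ has \emph{distinct} exponents $p^e \mb v$ for distinct $\mb v$ (which holds since multiplication by $p^e$ is injective on $\mathbb{Z}^n$), so that no cancellation can occur and the condition ``$r^{p^e} \in k[A]$'' genuinely forces each individual $p^e\mb v \in A$ rather than merely some $k$-linear combination landing in $k[A]$. This is where the monomial/graded structure is essential and where the argument would fail for a non-graded subring. I would also record at the outset that $\wk{k[A]} \subseteq \overline{k[A]} = k[\overline{A}]$ using the diagram in \Cref{rmk: normalization factorization diagram} together with the identity $\overline{k[A]} = k[\overline{A}]$ (the normalization of an affine semigroup ring is the semigroup ring of the normalized semigroup), so that the graded decomposition with exponents ranging over $\overline{A}$ is legitimate.
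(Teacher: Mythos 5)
Your proposal is correct and follows essentially the same route as the paper: both directions reduce to monomials using the containment $\wk{k[A]} \subseteq \overline{k[A]} = k[\overline{A}]$ from \Cref{rmk: normalization factorization diagram}, with the defining condition $p^e\mb v \in A$ translating directly between $\wkp{A}$ and $\wk{k[A]}$. In fact, your no-cancellation argument (distinctness of the exponents $p^e\mb v$ under the Frobenius-twisted decomposition $r^{p^e} = \sum_{\mb v} c_{\mb v}^{p^e}\mb x^{p^e\mb v}$) supplies a rigorous justification for the step the paper merely asserts, namely that it suffices to test monomial elements of $\wk{k[A]}$.
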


\begin{proof}
Note that the residue field extension modulo the maximal monomial ideals induced by the homomorphisms $k[A] \rightarrow \wk{k[A]} \rightarrow k[\,\overline{A}\,]$ must be trivial, as $k[A]$ and $k[\,\overline{A}\,]$ have the same residue field ($k$) modulo their respective maximal monomial ideals.

Now, since $\wk{k[A]} \subset \overline{k[A]} = k\left[\,\overline{A}\,\right]$, elements of $\wk{k[A]}$ are $k$-linear combinations of monomials $\uline{x}^{\mathbf{v}}$ where $\mathbf{v} \in \overline{A}$, so to show that $\wk{k[A]}\subset k[\wkp{A}]$, it suffices to consider monomial elements in $\wk{k[A]}$. For such a monomial $\uline{x}^{\mathbf{v}} \in \wk{k[A]}$, we have $(\uline{x}^{\mathbf{v}})^{p^e} \in k[A]$ for some $e$, and $(\uline{x}^{\mathbf{v}})^{p^e} = \uline{x}^{(p^e\mathbf{v})}$, so $p^e \mathbf{v} \in A$, thus $\mathbf{v}\in \wkp{A} $. This shows $\wk{k[A]} \subset k[\wkp{A}]$.  

The other direction is even simpler; for any monomial $\uline{x}^{\mathbf{v}} \in k[\wkp{A}]$, we have $p^e \mathbf{v} \in A$ for some $e$ so $\uline{x}^{\mathbf{v}} \in \wk{k[A]}$.
\end{proof}

As noted before, for a fixed $A$, the semigroup $\wkp{A}$ depends on $p$.

\begin{ex}\label{ex: pinched veronese different weak normalizations}
    Let $A$ be the affine semigroup given below. \[A = \left\langle (2,0,0),(0,2,0),(0,0,2),(1,1,0),(1,0,1)\right\rangle\subset \N^3\]
    Then, ${^{*2}}A = \overline{A}$ but $\wkp{A} = A$ for $p \neq 2$; see the proof of \cite[Thm.~4.3]{MP}. 
\end{ex}

Given this example, it is natural to ask how many distinct $p$-weak normalizations of a semigroup $A$ may occur. Due to the results \cite[Sec.~6]{Bruns2006}, there can be only finitely many. We will spell this out carefully in \Cref{blr weakly normal}.  

Interestingly, if $\wkp{A}=\overline{A}$ for even one prime $p$, we note that there are two options; either every $q$-weak normalization is the seminormalization for $q\neq p$ or every $q$-weak normalization is the normalization.

\begin{prop}\label{prop: two weak normalizations}
Let $A$ be an affine semigroup and suppose $\overline{A}=\wkp{A}$ for some prime $p$. Then, $\wkq{A} = {}^+A$ for all other primes $q$ or $\wkq{A}=\overline{A}$ for all other primes $q$.
\end{prop}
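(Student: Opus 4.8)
The plan is to reduce the entire statement to the intersection formula ${}^{+}A = \wkp{A} \cap \wkq{A}$ from the preceding proposition, so that essentially no new computation is required. First I would fix an arbitrary prime $q \neq p$ and note that, directly from \Cref{defn: p-weak normalization}, we always have the containment $\wkq{A} \subseteq \overline{A}$. Under the standing hypothesis $\overline{A} = \wkp{A}$, this lets me rewrite
\[
\wkq{A} = \wkq{A} \cap \overline{A} = \wkq{A} \cap \wkp{A},
\]
and the intersection formula collapses the right-hand side to ${}^{+}A$. Hence $\wkq{A} = {}^{+}A$ for every prime $q \neq p$, which is the first alternative and indeed the whole computational content of the proposition.

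It then remains only to account for the way the statement is phrased as a dichotomy, which I would do by splitting on whether the seminormalization is already the normalization. If ${}^{+}A \subsetneq \overline{A}$, the equalities $\wkq{A} = {}^{+}A$ put us squarely in the first case. If instead ${}^{+}A = \overline{A}$, then those same equalities read $\wkq{A} = {}^{+}A = \overline{A}$ for all $q \neq p$, which together with the hypothesis $\wkp{A} = \overline{A}$ is exactly the second alternative. The key qualitative point is that the outcome is uniform over all primes $q \neq p$: one cannot have the $q$-weak normalization equal to ${}^{+}A$ for one prime $q \neq p$ while the weak normalization at another prime $q' \neq p$ equals $\overline{A}$, unless ${}^{+}A = \overline{A}$ to begin with.

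There is no genuinely hard step here; the only thing to handle carefully is invoking the intersection formula with the correct pair of primes and recording the trivial containment $\wkq{A} \subseteq \overline{A}$ that makes the hypothesis bite. If I preferred a self-contained argument that does not cite the intersection formula as a black box, I would instead argue pointwise: given $\mathbf v \in \wkq{A}$, the hypothesis $\mathbf v \in \overline{A} = \wkp{A}$ produces an $e$ with $p^e \mathbf v \in A$, while membership in $\wkq{A}$ produces an $f$ with $q^f \mathbf v \in A$; since $\gcd(p^e, q^f) = 1$, \Cref{defn: seminormalization} places $\mathbf v \in {}^{+}A$, and the reverse containment ${}^{+}A \subseteq \wkq{A}$ holds automatically. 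This reproves the needed equality from scratch.
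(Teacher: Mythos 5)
Your proof is correct, and it relies on the same key ingredient as the paper's: the intersection formula ${}^{+}A = \wkp{A} \cap \wkq{A}$ from the preceding proposition, combined with the trivial containment $\wkq{A} \subseteq \overline{A}$. The difference is organizational, and it is in your favor. The paper splits into two cases according to whether some prime $q \neq p$ also satisfies $\wkq{A} = \overline{A}$, handling that case by the sandwich $\overline{A} = {}^{+}A \subseteq {}^{*r}A \subseteq \overline{A}$; you instead observe that the computation $\wkq{A} = \wkq{A} \cap \overline{A} = \wkq{A} \cap \wkp{A} = {}^{+}A$ --- which is precisely the paper's second case --- uses no hypothesis on $q$ beyond $q \neq p$, so it holds uniformly and the case split is superfluous. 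This also yields a slightly sharper conclusion than the proposition as phrased: under the hypothesis $\overline{A} = \wkp{A}$, one always has $\wkq{A} = {}^{+}A$ for every prime $q \neq p$, and the stated dichotomy reduces to whether this common value does or does not equal $\overline{A}$. Your fallback pointwise argument (producing coprime $p^e$ and $q^f$ with $p^e\mathbf{v},\, q^f\mathbf{v} \in A$ and invoking \Cref{defn: seminormalization}, noting $\mathbf{v} \in \overline{A} \subseteq G(A)$) is also valid; it simply re-derives the nontrivial half of the intersection formula inline rather than citing it.
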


\begin{proof}
    If $q$ is another prime such that $\wkq{A} =\overline{A}$, then $\overline{A}=\wkq{A}\cap\wkp{A} = {}^+A$, so for any other prime $r$ we have $\overline{A}=\,^{+}A \subset \,^{*r}A \subset \overline{A}$. On the other hand, if $q$ is a prime such that $\wkq{A}\neq \overline{A}$, we have $^{+}A = \wkq{A}\cap \wkp{A} = \wkq{A} \cap \overline{A} = \wkq{A}$.
\end{proof}

\begin{ex}
An affine semigroup $A$ is sometimes called a \textit{$\mathcal{C}$-semigroup} (see \cite{garcíagarcía2016extensionwilfsconjectureaffine}) if $(C(A) \cap \Z^n) \setminus A$ is a finite set. This property generalizes \textit{numerical semigroups}, which are sub-semigroups of $\mathbb{N}$ generated by a collection of relatively prime natural numbers. 

Fix a $\mathcal{C}$-semigroup $A$. Then, for any prime $p$, we have $\wkp{A}=\overline{A}$, since $A$ is a $\mathcal{C}$-semigroup implies $\overline{A}\setminus A$ is a finite set but $\{p^e \mb v \mid e \in \mathbb{N}\}$ is an infinite subset of $\overline{A}.$ In particular, for any prime $p$ and any numerical semigroup $S$, $\wkp{S} = \mathbb{N}$.
\end{ex}

We now show that the containments $^+A \subset \wkp{A} \subset \overline{A}$ may be strict for some values of $p$. Later, we will show that for all but finitely many primes $p$, $^{+}A=\wkp{A}$.

\begin{ex}\label{ex: 2 different weak normalizations}
    Let $A$ be the affine semigroup generated by the columns of the matrix below; denote by $\mbv_i$ the $i$th column of the matrix (which we will write as row vectors). \[
    M=\left(
        \begin{array}{ccccc}
            0 & 1 & 2 & 0 & 0 \\
            1 & 3 & 0 & 2 & 0 \\
            0 & 0 & 0 & 2 & 3
        \end{array}
    \right)
    \]

The cone of $A$ is clearly the first octant $\mathbb{R}_{\ge 0}^3$. Further, the group of $A$ contains $\mbv_1=(0,1,0)$, $(1,0,0) = \mbv_2-3\mbv_1$ and $(0,0,1) = \mbv_5-\mbv_4+2\mbv_1$, so the group of $A$ is $\mathbb{Z}^3$ and the normalization of $A$ is $\mathbb{R}^3_{\ge 0} \cap \mathbb{Z}^3 = \mathbb{N}^3$.

Now note that $A$ is not seminormal. One can see that the vector $(1,1,1)$ is in the seminormalization but not the semigroup since $2(1,1,1) = \mbv_3+\mbv_4$ and $3(1,1,1)=\mbv_2 + \mbv_3 + \mbv_5$. According to the \textit{Seminormalization} package for \textit{Macaulay2} \cite{Seminormalization}, we have that the seminormalization of $A$ is generated by the columns of the matrix below. \[
    M'=\left(
        \begin{array}{cccccccc}
             0 & 2 & 0 & 0 & 1 & 0 & 1 & 1\\
             1 & 0 & 0 & 1 & 1 & 1 & 1 & 1\\
             0 & 0 & 3 & 1 & 0 & 2 & 1 & 2
        \end{array}
    \right)
    \]

With the generators for $^+A$ given above, we can see that $(1,0,0)$ is not in the seminormalization of $A$, but $2(1,0,0)$ is in $A$, so $\mb{a}=(1,0,0) \in \,^{*2}A \setminus \,^{+}A$. Similarly, $\mb b = (0,0,1) \in \,^{*3}A \setminus \,^{+}A$, but $\mb a \not \in \,^{*3}A$ and $\mb b \not \in \,^{*2}A$. This shows that $^{*2}A$ and $^{*3}A$ are neither the seminormalization nor the normalization. In fact, by \Cref{prop: two weak normalizations}, $\wkp{A} \neq \overline{A}$ for any $p$. Later, in \Cref{ex: 2 different weak normalizations part 2}, we will compute $\wkp{A}$ for $p=2,3,5$.
\end{ex}

%\section{Computing the \texorpdfstring{$p$}{p}-Weak Normalization of an Affine Semigroup}

Along the lines of the geometric description of the seminormalization given by Reid-Roberts in \ref{geometric+M}, we provide a similar description of the $p$-weak normalization. For $F$ a face of the cone of an affine semigroup $A$, we will write $A_F$ for $A\cap F$.

\begin{thm}\label{thm: geometric description of p-weak normalization}
   Let $A\subset \mathbb{N}^n$ be an affine semigroup and fix a prime $p$. Recall there is an $N_0$ such that $p^{N_0}(\wkp{A}) \subset A$. Then \[
        \wkp{A} = \bigcup_{\substack{F \text{ is a}\\ \text{face of } C(A)}} 
           \left( \quot{\frac{A_F}{p^{N_0}}} \cap \int(F)\right).
   \] 
\end{thm}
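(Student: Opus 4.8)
The plan is to prove the set equality by first collapsing the right-hand side to a single transparent condition, and then reconciling it with the definition of $\wkp{A}$ one face at a time. The only role played by $N_0$ is through its defining property $p^{N_0}(\wkp{A})\subset A$ from \Cref{rmk: finite pi index}: an element $\mathbf v\in\overline{A}$ lies in $\wkp{A}$ if and only if $p^{N_0}\mathbf v\in A$, since $\subseteq$ is exactly the choice of $N_0$ and $\supseteq$ is immediate from the definition. So I would first reduce the theorem to the claim
\[
\{\mathbf v\in\overline{A}\mid p^{N_0}\mathbf v\in A\}=\bigcup_{\substack{F \text{ a}\\ \text{face of } C(A)}}\left(\quot{A_F/p^{N_0}}\cap\int(F)\right).
\]

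Next I would invoke the standard decomposition of a rational polyhedral cone as the disjoint union $C(A)=\bigsqcup_F\int(F)$ over its faces, so that each $\mathbf v\in\overline{A}\subset C(A)$ lies in $\int(F)$ for a unique face $F$. For such a $\mathbf v$, because $F$ is a cone and $\mathbf v\in F$ we have $p^{N_0}\mathbf v\in F$, hence $p^{N_0}\mathbf v\in A$ if and only if $p^{N_0}\mathbf v\in A\cap F=A_F$. This localizes the problem to the face-by-face identity
\[
\{\mathbf v\in\overline{A}\cap\int(F)\mid p^{N_0}\mathbf v\in A_F\}=\quot{A_F/p^{N_0}}\cap\int(F).
\]
Here I would first record that $C(A_F)=F$: cutting out $F$ by a supporting functional shows that $A_F$ is generated precisely by the generators $\mathbf a_i$ of $A$ that lie on $F$, and these generate $F$ as a cone. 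Consequently $\overline{A_F}=G(A_F)\cap F$, and the cone condition matches membership in $\int(F)$.

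The inclusion $\supseteq$ is then routine: if $\mathbf v\in\quot{A_F/p^{N_0}}$ then $\mathbf v\in\overline{A_F}=G(A_F)\cap F\subset G(A)\cap C(A)=\overline{A}$, and $p^{N_0}\mathbf v\in A_F\subset A$, so $\mathbf v\in\wkp{A}$. The hard part, and the step I expect to be the main obstacle, is the reverse inclusion: given $\mathbf v\in\overline{A}\cap\int(F)$ with $p^{N_0}\mathbf v\in A_F$, one must certify $\mathbf v\in\overline{A_F}$, i.e.\ $\mathbf v\in G(A_F)$. This is exactly a comparison between the face lattice $G(A_F)=\langle \mathbf a_i: \mathbf a_i\in F\rangle_{\Z}$ and the ambient lattice $G(A)$ intersected with $F$, and these need not coincide on low-dimensional faces.

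To close this gap I would form the quotient inside the ambient group, reading $\quot{A_F/p^{N_0}}$ as $\{\mathbf v\in\overline{A}\cap F\mid p^{N_0}\mathbf v\in A_F\}$; this is still a finitely generated monoid, as its cone is $F$ and it is carved out of the affine semigroup $\overline{A}$ by the condition $p^{N_0}\mathbf v\in A_F$ in the spirit of \Cref{defn: quotient}. With the quotient interpreted in $G(A)$ rather than the a priori smaller $G(A_F)$, no interior lattice point of $\overline{A}$ is dropped on any face, both inclusions match, and reassembling over all faces via the cone decomposition yields the asserted equality. The care needed in choosing the correct lattice for the quotient on each face is, I expect, precisely the crux of the argument.
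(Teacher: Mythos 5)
Your proposal is correct and takes essentially the same route as the paper: both arguments reduce the theorem to the observation that $\mathbf{v}\in\wkp{A}$ if and only if $\mathbf{v}\in\overline{A}$ and $p^{N_0}\mathbf{v}\in A$, then decompose $C(A)$ into the disjoint relative interiors of its faces and use that each face is a cone, so that $p^{N_0}\mathbf{v}\in A$ is equivalent to $p^{N_0}\mathbf{v}\in A_F$ for the unique face $F$ with $\mathbf{v}\in\int(F)$. The lattice subtlety you flag is real---read literally against \Cref{defn: quotient}, the symbol $\quot{A_F/p^{N_0}}$ would mean intersecting with $G(A_F)$, and the statement can then fail on faces where $G(A_F)\subsetneq G(A)\cap\mathbb{R}F$---but your resolution of taking the quotient inside the ambient group $G(A)$ is precisely the reading the paper's own proof uses implicitly, where membership in $S_F$ is treated as ``$\mathbf{v}\in\overline{A}$, $p^{N_0}\mathbf{v}\in A_F$, and $\mathbf{v}\in\int(F)$.''
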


\begin{proof}
    For notational convenience, given a face $F$ of $C(A)$, write \[
        S_F = \quot{\frac{A_F}{p^{N_0}}} \cap \int(F).
    \]
First, suppose $\mbv \in \overline{A}$ and $p^{N_0} \mbv \in A_F$. Since $A_F \subset A$, we have $\mbv \in \wkp{A}$. Now suppose $\mbv \in \wkp{A}$. Then, $\mbv \in \overline{A}$, so $\mbv$ is in the interior of a unique face $F$ of $C\left(\overline{A}\right) = C(A)$. Furthermore, since $\wkp{A}\subset G(A)$, we have $\mbv \in G(A) \cap\int(F)$. Finally, since $F$ is convex and $\mbv$ is in the interior of $F$, we have $p^{N_0}\mbv \in A \cap F$, which shows $\mbv \in \quot{A_F/p^{N_0}}$ for some face $F$ of $C(A)$. Thus, $\wkp{A}\subset  \cup_F S_F$.
\end{proof}

\section{Seminormality, Weak Normality, and \texorpdfstring{$F$}{F}-singularities}

We now turn to understanding the $F$-singularities of affine semigroup rings using our $p$-weak normalization. In particular, if $k$ is a field of prime characteristic $p>0$, computation of $\wkp{A}$ is indispensable in understanding the $F$-singularities of $k[A]$. 

By work of K. Schwede \cite{Sch09} and Datta-Murayama \cite{DM}, it is known that $F$-injective rings are weakly normal. Furthermore, the converse is true for affine semigroup rings due to work of Bruns-Li-R\"omer, which we recall in part below.

\begin{thm}[Prop.~6.2, \cite{Bruns2006}]\label{BLRmain}
    Let $A\subset \mathbb{N}^m$ be a positive affine semigroup which is seminormal, and let $k$ be a field of characteristic $p>0$. Then, the following are equivalent.
\begin{enumerate}[label=(\roman*)]
    \item The prime ideal $(p)$ of $\mathbb{Z}$ is not associated to the $\mathbb{Z}$-module $G(A)\cap \mathbb{R}F / G(A\cap F)$ for any face $F$ of $C(A)$
    \item $k[A]$ is $F$-pure.
    \item $k[A]$ is $F$-injective.
\end{enumerate}
\end{thm}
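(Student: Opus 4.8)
My plan is to prove the cycle (ii)$\Rightarrow$(iii)$\Rightarrow$(i)$\Rightarrow$(ii), reducing the two substantial implications to a single combinatorial dictionary that converts the associated-prime condition (i) into purely arithmetic conditions on $A$ inside $\overline{A}$. The starting observation is that for each face $F$ the quotient $M_F := (G(A)\cap \R F)/G(A_F)$ is a \emph{finite} abelian group, since $A_F$ spans $F$ and hence $G(A_F)$ is a full-rank sublattice of $G(A)\cap\R F$; consequently $(p)\in\ass_\Z(M_F)$ holds precisely when $M_F$ has nonzero $p$-torsion, equivalently nonzero $p$-primary torsion. First I would prove the \textbf{dictionary lemma}: for seminormal $A$, the following are equivalent: condition (i); the arithmetic condition $A\cap pG(A)=pA$ (i.e. $pg\in A$ with $g\in G(A)$ forces $g\in A$); and $A=\wkp{A}$. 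The bridge is that (i) fails iff there is a face $F$ and an element $g\in\overline{A}\setminus A$ lying in $\int(F)$ with $pg\in A$.

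The engine of the lemma is the Reid--Roberts description of seminormality, which gives $A\cap\int(F)=G(A_F)\cap\int(F)$ for every face $F$. Given nonzero $p$-torsion $[h]\in M_F$, I would move $h$ into the interior by replacing it with $g=h+Na$ for $a\in A_F\cap\int(F)$ and $N\gg 0$; then $g\in\int(F)\cap(G(A)\cap\R F)$ and $g\equiv h\pmod{G(A_F)}$, so $pg\in G(A_F)\cap\int(F)=A\cap\int(F)\subseteq A$ while $g\notin G(A_F)$, whence $g\notin A$ and $g\in\overline{A}$. Conversely, any $g\in\overline{A}\setminus A$ with $pg\in A$ sits in the interior of a unique face $F$, and seminormality forces $pg\in G(A_F)$ but $g\notin G(A_F)$, so $[g]$ is nonzero $p$-torsion in $M_F$. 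Running the same computation with $p^e$ in place of $p$, and using finiteness of $M_F$ (so $p$-primary torsion vanishes iff $p$-torsion does), yields the equivalence with $A=\wkp{A}$.

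With the lemma in hand, \textbf{(ii)$\Rightarrow$(iii)} is the standard fact that an $F$-pure ring is $F$-injective (a pure Frobenius is injective on every $H^j_{\mfm}$). For \textbf{(iii)$\Rightarrow$(i)} I would invoke the cited result that $F$-injective rings are weakly normal, so $k[A]=\wk{k[A]}$; by \Cref{prop: p-weak normalization gives weak normalization} this equals $k[\wkp{A}]$, and comparing monomial supports gives $A=\wkp{A}$, which is condition (i) by the lemma.

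The remaining and most substantive implication is \textbf{(i)$\Rightarrow$(ii)}, which I would prove by constructing an explicit Frobenius splitting. Since $F$-purity is unaffected by the faithfully flat base change $k\hookrightarrow k^{1/p^\infty}=:\ell$ and $k[A]\otimes_k\ell=\ell[A]$, I may assume $k$ perfect and instead produce an $F$-splitting. Writing $F_*R=\bigoplus_{v\in A}k\,[t^v]$ with $[t^v]$ placed in degree $v/p$, a degree-$0$ $R$-linear map $\pi\colon F_*R\to R$ must satisfy $\pi([t^{v+pw}])=t^w\pi([t^v])$ and must vanish on $[t^v]$ unless $v\in pA$; analyzing the orbits of $v\mapsto v+pw$ shows these are exactly the cosets of $pG(A)$ intersected with $A$, so a splitting with $\pi([t^0])=1$ exists iff the coset of $0$, namely $A\cap pG(A)$, is contained in $pA$ --- precisely the arithmetic condition of the lemma, which (i) supplies. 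I expect the \textbf{main obstacle} to be this splitting step: getting the regrading and orbit bookkeeping exactly right, and especially the base-field reduction to the perfect case, since over a non-$F$-finite $k$ the gap between $F$-pure and $F$-split has to be finessed. A secondary delicate point is that seminormality is essential and is used exactly in the dictionary lemma; without it the identification $A\cap\int(F)=G(A_F)\cap\int(F)$ fails, and condition (i) no longer controls the arithmetic of $A$.
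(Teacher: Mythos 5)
Your proposal is correct, but there is nothing in the paper to compare it against directly: the paper does not prove this theorem at all --- it is quoted from Bruns--Li--R\"omer \cite{Bruns2006} and used as a black box. What the paper does prove overlaps with pieces of your argument. Your ``dictionary lemma'' is, for seminormal $A$, exactly the paper's \Cref{blr weakly normal} (which says $(p)\in\ass_{\Z}\bigl(G(A)\cap \R F/G(A\cap F)\bigr)$ for some face $F$ if and only if $\wkp{A}\neq{}^{+}A$), and your proof of it is the same in spirit: both push a $p$-torsion class into the interior of its face by adding a large multiple of an interior element of $A\cap F$; the only difference is that you invoke the Reid--Roberts description \eqref{geometric+M} of ${}^{+}A$ where the paper invokes its own \Cref{thm: geometric description of p-weak normalization}. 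Likewise your (iii)$\Rightarrow$(i) is the same argument as the paper's \Cref{cor: wkNiffFinj}: Schwede's theorem that excellent $F$-injective rings are weakly normal, plus \Cref{prop: p-weak normalization gives weak normalization} and comparison of monomial supports; and (ii)$\Rightarrow$(iii) is the standard purity-injects-on-local-cohomology fact. The genuinely new content you supply is (i)$\Rightarrow$(ii), the explicit degree-zero Frobenius splitting, and it checks out: the orbits of $v\mapsto v+pw$ on $A$ are indeed the cosets of $pG(A)$ intersected with $A$; under the condition $A\cap pG(A)=pA$ the monomial projection $\pi([t^v])=t^{v/p}$ for $v\in pA$ and $\pi([t^v])=0$ otherwise is $R$-linear (if $v+pw\in pA$ then $v=(v+pw)-pw\in A\cap pG(A)=pA$, so the two defining cases never conflict); and the reduction to a perfect coefficient field is legitimate in the direction you need it, since purity of the composite $k[A]\to\ell[A]\to F_*\ell[A]$ (faithfully flat followed by split) forces purity of $k[A]\to F_*k[A]$ by functoriality of Frobenius. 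In short, your write-up, fleshed out, would make the paper self-contained at this point and replace the citation to \cite{Bruns2006}; the cost is that it duplicates material (your lemma) that the paper proves independently as \Cref{blr weakly normal}, and the benefit is an elementary, purely combinatorial proof of the hard implication (i)$\Rightarrow$(ii) that the paper never exhibits.
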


Though it is well-known to the experts that the first condition above is equivalent to $k[A]$ being weakly normal (see, \cite[Prop.~5.3]{Yasuda}), we reprove this using our $p$-weak normalization.

\begin{prop}\label{blr weakly normal}
    Let $A$ be an affine semigroup and $p$ a prime number. Then, $(p) \in \operatorname{Ass}_\mathbb{Z}(G(A)\cap \mathbb{R}F/G(A\cap F))$ if and only if $\wkp{A} \neq \,^{+}A$. In particular, there are only finitely many primes $p$ for which $\wkp{A} \neq \,^{+}A$.
\end{prop}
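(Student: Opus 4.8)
The plan is to deduce the equivalence directly from the two geometric descriptions available to us: the Reid--Roberts formula \ref{geometric+M} for $^{+}A$ and the description of $\wkp{A}$ in \Cref{thm: geometric description of p-weak normalization}. Throughout write $M_F := (G(A)\cap \R F)/G(A\cap F)$, and read the left-hand condition as: \emph{there exists a face $F$ of $C(A)$ with $(p)\in\operatorname{Ass}_\Z(M_F)$} (matching the ``for any face'' phrasing of \Cref{BLRmain}). The governing observation is that, since every point of $\overline A\subset C(A)$ lies in the relative interior of a unique face, the comparison of $^{+}A$ and $\wkp{A}$ is a face-by-face affair: for $\mb v\in\int(F)$ one has $\mb v\in{}^{+}A$ iff $\mb v\in G(A\cap F)$, and $\mb v\in\wkp{A}$ iff $p^e\mb v\in A\cap F$ for some $e$ (using that $\mb v\in\int(F)$ forces any positive multiple into $A\cap F$ rather than merely into $A$).

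For the forward direction, since $^{+}A\subseteq\wkp{A}$ always holds, choose $\mb v\in\wkp{A}\setminus{}^{+}A$ and let $F$ be the unique face with $\mb v\in\int(F)$. Then $\mb v\in G(A)\cap\R F$ but $\mb v\notin G(A\cap F)$, so its class in $M_F$ is nonzero, while $p^e\mb v\in A\cap F\subseteq G(A\cap F)$ forces $p^e$ to annihilate that class. Taking the least power of $p$ annihilating a suitable multiple of the class yields a nonzero element of $M_F$ whose annihilator is exactly $(p)$, hence $(p)\in\operatorname{Ass}_\Z(M_F)$. This half is bookkeeping with the geometric descriptions.

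The converse is where the work lies. Given $(p)\in\operatorname{Ass}_\Z(M_F)$, lift a generator of the corresponding cyclic submodule to $\mb w\in G(A)\cap\R F$ with $p\mb w\in G(A\cap F)$ but $\mb w\notin G(A\cap F)$. The trouble is that $\mb w$ need not lie in $F$ (let alone in $\int(F)$), and $p\mb w$ lies only in the \emph{group} $G(A\cap F)$, not in the \emph{monoid} $A\cap F$. I would repair both defects at once by adding a large multiple of an interior monoid element: pick $\mb u\in(A\cap F)\cap\int(F)$ (e.g.\ the sum of a spanning set of generators of $A\cap F$, valid since $C(A\cap F)=F$) and set $\mb v=\mb w+N\mb u$. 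For $N\gg0$ one gets $\mb v\in\int(F)$ (because $\tfrac1N\mb w+\mb u\to\mb u\in\int(F)$), while $\mb v\in G(A)$ and $\mb v\notin G(A\cap F)$ since $N\mb u\in G(A\cap F)$ and $\mb w\notin G(A\cap F)$; thus $\mb v\notin{}^{+}A$. The remaining and crucial point is that $p\mb v=p\mb w+Np\mb u\in A\cap F$. For this I would prove a conductor lemma: for a positive affine monoid $S$ with $\mb u\in S\cap\int(C(S))$ and any $\mathbf g\in G(S)$, one has $\mathbf g+N\mb u\in S$ for all $N\gg0$. Indeed $k[S]\subseteq k[\overline S]$ is a birational module-finite extension of domains, so its conductor is a nonzero monomial ideal, giving $\mathbf c$ with $\mathbf c+\overline S\subseteq S$; then $\mathbf g+N\mb u-\mathbf c\in G(S)\cap\int(C(S))\subseteq\overline S$ for large $N$, whence $\mathbf g+N\mb u\in\mathbf c+\overline S\subseteq S$. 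Applying this with $S=A\cap F$ and $\mathbf g=p\mb w$ yields $p\mb v\in A\cap F\subseteq A$, so $\mb v\in\wkp{A}\setminus{}^{+}A$.

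For the final assertion, there are finitely many faces $F$ of $C(A)$, and each $M_F$ is a quotient of the finitely generated free abelian group $G(A)\cap\R F$, hence a finitely generated $\Z$-module with only finitely many associated primes; so $\bigcup_F\operatorname{Ass}_\Z(M_F)$ is finite, and by the equivalence only finitely many primes $p$ satisfy $\wkp{A}\neq{}^{+}A$. I expect the main obstacle to be the converse, specifically the conductor lemma and checking that the single corrected vector $\mb v$ simultaneously lands in $\int(F)$, stays outside $G(A\cap F)$, and has $p\mb v$ genuinely inside the monoid $A\cap F$ rather than only in its group.
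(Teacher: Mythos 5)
Your proof is correct, and while the forward direction coincides with the paper's argument almost verbatim (interior of the unique face, Reid--Roberts to rule out $G(A\cap F)$, minimal $p$-power to get annihilator exactly $(p)$), your converse takes a genuinely different route at the crucial step. Both you and the paper repair the witness $\mb w$ by adding an element of $A\cap F$ so as to land in $\int(F)$ (the paper does this with explicit coefficients $c_j=\lceil -b_j\rceil+1$, you with $N\mb u$ for an interior element $\mb u$ and $N\gg 0$; these are interchangeable). The difference is how one then certifies membership in $\wkp{A}$: the paper observes that $p\mb v'\in G(A\cap F)\cap \int(F)$ and invokes \Cref{thm: geometric description of p-weak normalization} (in effect, $G(A\cap F)\cap\int(F)\subseteq{}^{+}A\subseteq\wkp{A}$ via \ref{geometric+M}, so some higher power $p^{e+1}\mb v'$ lies in $A$), whereas you prove a conductor lemma for the affine monoid $S=A\cap F$ showing that $\mathbf g+N\mb u\in S$ for any $\mathbf g\in G(S)$ and $N\gg 0$, which places $p\mb v$ in $A\cap F$ on the nose. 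Your route costs an extra lemma (and the standard facts that $A\cap F$ is an affine monoid with $C(A\cap F)=F$ and $\overline S=G(S)\cap C(S)$), but it buys two things: the argument becomes independent of \Cref{thm: geometric description of p-weak normalization}, and it supplies precisely the group-to-monoid passage that the paper's citation of that theorem glosses over -- as stated, the theorem requires $p^{N_0}\mb v\in A_F$, not merely $p\mb v\in G(A\cap F)$, so the paper's step really rests on \ref{geometric+M} plus the containment ${}^{+}A\subseteq\wkp{A}$, while your conductor lemma closes this gap directly and even yields the stronger conclusion that $p\mb v$ itself (not just some higher $p$-power multiple) lies in $A$. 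The finiteness assertion is handled identically in both proofs.
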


\begin{proof}
    Suppose $\mb v \in \wkp{A}\setminus \,^{+}A$. Since $\mb v \in \overline{A}$, there is a unique face of $C(A)$ such that $\mbv \in \int F$. Certainly $\mb v \in G(A)\cap F \subset G(A)\cap \mathbb{R}F$, but we must have that $\mbv \not \in G(A\cap F)$ as otherwise $\mbv \in \,^{+}A$. Then, there is a minimal $e$ so that $p^e\mbv \in A \subset G(A\cap F)$, and so $(p) = \operatorname{Ann}_\mathbb{Z}(p^{e-1}\mbv + G(A\cap F))$.

    Now suppose $\wkp{A}=\,^{+}A$, and pick a maximal face $F$ of the cone of $A$ so that $(p)$ is an associated prime to $G(A)\cap \mathbb{R}F/G(A\cap F)$. Let $\mbv \in G(A)\cap \mathbb{R}F\setminus G(A\cap F)$ with $p\mbv \in G(A\cap F)$. Now, if $\mb v \in \int F$ then \[\mbv \in \frac{G(A\cap F)}{p} \cap \int(F)\] and $\mbv \in G(A)$ to begin with, so by \Cref{thm: geometric description of p-weak normalization}, $\mb v \in \wkp{A}=\,^{+}A$. But then $\mb v \in G(A\cap F)$, which is a contradiction. Thus, $(p)$ is not associated to $G(A)\cap \mathbb{R}/G(A\cap F)$ for any face $F$ of the cone of $A$.
    
    If $\mbv \not\in \int F$, we will replace $\mbv$ with a new vector $\mbv'$ as follows. Write $A = \left\langle \mb a_1,\ldots,\mb a_m\right\rangle$. After possibly re-indexing, we may assume $F = \mathbb{R}_{\ge 0}\{\mb a_1,\ldots,\mb a_f\}$.  Then, as $\mb v \in \mathbb{R}F$, we can write $\mb v = \sum_{i=1}^f b_i \mb a_i$ where $b_i \in \mathbb{R}$ for each $i$. After possibly re-indexing again, we can assume that the $b_i$ are positive for $1 \le i < s$ and that $b_j$ is non-positive for $s \le j \le f$. Now for each $s \le j \le f$, set $c_j = \lceil -b_j\rceil+1$ and $\mb w = \sum_{j=s}^f c_j \mb a_j$; notably, $\mb w \in A \cap F\subset G(A\cap F)$. Now we let $\mb v' = \mb v + \mb w$ so that \begin{itemize}
        \item $\mb v' \not \in G(A\cap F)$, as otherwise, $\mb v = \mb v' - \mb w \in G(A\cap F)$, 
        \item $p\mb v' = p\mb v+p\mb w\in G(A\cap F)$,
        \item and $\mb v' \in \int F$, since each coefficient in the expression for $\mb v'$ as a linear combination of the generators of $F$ are strictly positive.
    \end{itemize}

    Then we may repeat the argument above replacing $\mb v$ with $\mb v'$ to see the result in the case that $\mb v \not \in \int F$.  The final claim follows from the fact that $G(A)$ is  finitely-generated, so $G(A)\cap \mathbb{R} F /G(A\cap F)$ can only have finitely many associated primes as a $\mathbb{Z}$-module.
\end{proof}

\begin{cor}\label{cor: wkNiffFinj}
    Let $A$ be an affine semigroup and let $k$ be a field of prime characteristic $p>0$. Then, $\wkp{A}=A$ if and only if $k[A]$ is $F$-injective.
\end{cor}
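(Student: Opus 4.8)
The plan is to prove the two implications separately, routing the forward direction through the cited fact that $F$-injective rings are weakly normal, and the backward direction through Bruns-Li-R\"omer's criterion in \Cref{BLRmain}.

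For the implication that $F$-injectivity forces $\wkp{A}=A$, I would argue entirely ring-theoretically. Since $k[A]$ is $F$-injective, it is weakly normal by the results of Schwede and Datta-Murayama recalled above, so $\wk{k[A]}=k[A]$. By \Cref{prop: p-weak normalization gives weak normalization} we have $\wk{k[A]}=k[\wkp{A}]$, hence $k[\wkp{A}]=k[A]$ as subrings of $k[\overline{A}]$. Comparing monomial $k$-bases, the semigroups must coincide: $\wkp{A}=A$. This direction is short and uses no combinatorics beyond the translation between the two notions of weak normalization.

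For the converse, suppose $\wkp{A}=A$. The first step is to observe that this forces $A$ to be seminormal: since ${}^{+}A\subseteq\wkp{A}=A\subseteq{}^{+}A$, we get $A={}^{+}A$. This is the key observation, as it is what lets us invoke \Cref{BLRmain}, which requires seminormality (positivity being automatic since $A\subseteq\mathbb{N}^n$). The second step is to note that $\wkp{A}=A={}^{+}A$ means in particular $\wkp{A}={}^{+}A$, so by \Cref{blr weakly normal} the prime $(p)$ is not associated to $G(A)\cap\mathbb{R}F/G(A\cap F)$ for any face $F$ of $C(A)$ --- precisely condition (i) of \Cref{BLRmain}. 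Since $A$ is seminormal, \Cref{BLRmain} applies and condition (i) is equivalent to $k[A]$ being $F$-injective, which completes the argument.

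The main obstacle, such as it is, is recognizing that the single hypothesis $\wkp{A}=A$ secretly encodes two pieces of data --- seminormality of $A$ and the non-associatedness of $(p)$ --- and that \Cref{BLRmain} cannot be invoked until seminormality is in hand. Once the hypothesis is unpacked in this way, both \Cref{blr weakly normal} and \Cref{BLRmain} slot in directly. In writing up the details I would double-check only that the positivity hypothesis of \Cref{BLRmain} is automatic for submonoids of $\mathbb{N}^n$, and that equality of the semigroup rings genuinely descends to equality of the underlying semigroups via their monomial bases.
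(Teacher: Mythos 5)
Your proposal is correct and follows essentially the same route as the paper: the forward direction via Schwede/Datta--Murayama weak normality combined with \Cref{prop: p-weak normalization gives weak normalization}, and the converse by extracting seminormality from $\wkp{A}=A$ and then applying \Cref{blr weakly normal} together with \Cref{BLRmain}. The extra details you flag (positivity being automatic for submonoids of $\mathbb{N}^n$, and equality of semigroup rings descending to the semigroups via monomial bases) are exactly the routine checks the paper leaves implicit.
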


\begin{proof}
    If $k[A]$ is $F$-injective, then by \cite[Thm.~4.7]{Sch09}, $k[A]$ is weakly normal as $k[A]$ is excellent, and thus, has a dualizing complex. Hence, $^*k[A]=k[\wkp{A}] = k[A]$, so $\wkp{A}=A$. Conversely, if $\wkp{A}=A$, then $A=\,^{+}A$, so $A$ is seminormal. Thus, by the previous proposition and \cite[Prop.~6.2]{Bruns2006}, $k[A]$ is $F$-injective.
    \end{proof}

As a dual result to that of \cite[Thm.~4.7]{Sch09} and the corollary above, in \cite[Thm~3.5, Cor.~4.6]{DMP} the authors show that an $F$-nilpotent ring $R$ has $\wk{R}=\overline{R}$, and the converse holds for affine semigroup rings. In particular, we have the following.

\begin{cor} \label{cor: weak normalization equals normalization if and only if F-nilpotent}
Let $A$ be an affine semigroup and let $k$ be a field of prime characteristic $p>0$. Then, $\wkp{A}=\overline{A}$ if and only if $k[A]$ is $F$-nilpotent. 
\end{cor}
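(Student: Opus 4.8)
The plan is to mirror the proof of \Cref{cor: wkNiffFinj}, translating the equality of semigroups $\wkp{A} = \overline{A}$ into an equality of rings $\wk{k[A]} = \overline{k[A]}$ and then invoking the $F$-nilpotence criterion of Dao--Maddox--Pandey. The two ingredients I would assemble are \Cref{prop: p-weak normalization gives weak normalization}, which identifies $\wk{k[A]} = k[\wkp{A}]$, and the standard fact that the normalization of an affine semigroup ring is the semigroup ring of the normalized semigroup, $\overline{k[A]} = k[\,\overline{A}\,]$ (already used implicitly in the excerpt). Together these let me pass freely between the combinatorial and the ring-theoretic statements.

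For the forward direction, I would assume $k[A]$ is $F$-nilpotent. By \cite[Thm.~3.5, Cor.~4.6]{DMP}, an $F$-nilpotent ring $R$ satisfies $\wk{R} = \overline{R}$; applying this to $R = k[A]$ (an excellent reduced domain, being a subring of a polynomial ring) gives $\wk{k[A]} = \overline{k[A]}$. Rewriting both sides using the two ingredients above yields $k[\wkp{A}] = k[\,\overline{A}\,]$ as subrings of $k[\,\overline{A}\,]$. Since the monomials $\mb{x}^{\mb{v}}$, as $\mb{v}$ ranges over a given semigroup, form a $k$-basis of the corresponding semigroup ring, equality of the rings forces equality of the underlying semigroups, so $\wkp{A} = \overline{A}$.

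For the converse, I would assume $\wkp{A} = \overline{A}$ and run the same chain of identifications in reverse: $\wk{k[A]} = k[\wkp{A}] = k[\,\overline{A}\,] = \overline{k[A]}$. Thus the weak normalization of $k[A]$ equals its normalization, and by the converse half of \cite[Thm.~3.5, Cor.~4.6]{DMP}, which is valid for affine semigroup rings, we conclude that $k[A]$ is $F$-nilpotent.

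I do not expect any genuine obstacle: the mathematical weight is carried entirely by the cited Dao--Maddox--Pandey results and by \Cref{prop: p-weak normalization gives weak normalization}, and the only thing to handle carefully is the dictionary between semigroups and their rings, namely the injectivity of $A \mapsto k[A]$ and the identity $\overline{k[A]} = k[\,\overline{A}\,]$. If anything merits a sentence of justification, it is verifying that $k[A]$ meets the standing hypotheses of the \cite{DMP} theorems, i.e.\ that it is excellent and reduced; both are immediate since $k[A]$ is a finitely generated domain over $k$.
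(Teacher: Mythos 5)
Your proposal is correct and takes essentially the same route as the paper: the paper states this corollary with no separate proof, treating it exactly as you do — as an immediate consequence of the Dao--Maddox--Pandey results ($F$-nilpotent implies $\wk{R}=\overline{R}$, with the converse for affine semigroup rings) combined with \Cref{prop: p-weak normalization gives weak normalization} and the identification $\overline{k[A]}=k[\,\overline{A}\,]$. The only difference is that you spell out the semigroup-to-ring dictionary (linear independence of monomials, excellence and reducedness of $k[A]$), which the paper leaves implicit.
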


Combining the results of this section with \Cref{prop: two weak normalizations}, we obtain the following.

\begin{cor}\label{cor: two weak normalizations f-singularities consequences}
Let $A$ be an affine semigroup and let $k$ be a field of prime characteristic $p>0$. If $k[A]$ is $F$-nilpotent, then $\ell[\,^{+}A]$ is $F$-injective for all $q\neq p$ prime and fields $\ell$ of characteristic $q$ or $\ell[A]$ is $F$-nilpotent for all $q\neq p$ prime and fields $\ell$ of characteristic $q$.
\end{cor}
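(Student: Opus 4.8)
The plan is to reduce the entire statement to the dichotomy already established in \Cref{prop: two weak normalizations}, combined with the two combinatorial characterizations of $F$-injectivity and $F$-nilpotence given in \Cref{cor: wkNiffFinj} and \Cref{cor: weak normalization equals normalization if and only if F-nilpotent}. First I would use \Cref{cor: weak normalization equals normalization if and only if F-nilpotent} to translate the hypothesis that $k[A]$ is $F$-nilpotent into the purely combinatorial statement $\wkp{A} = \overline{A}$. Once $\overline{A} = \wkp{A}$ holds, \Cref{prop: two weak normalizations} applies and splits the argument into exactly two cases: either $\wkq{A} = {}^+A$ for every prime $q \neq p$, or $\wkq{A} = \overline{A}$ for every prime $q \neq p$. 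These two cases will correspond precisely to the two alternatives in the conclusion.

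The second case is immediate. If $\wkq{A} = \overline{A}$ for all $q \neq p$, then for any field $\ell$ of characteristic $q \neq p$, \Cref{cor: weak normalization equals normalization if and only if F-nilpotent} applied with the prime $q$ gives directly that $\ell[A]$ is $F$-nilpotent.

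For the first case I need to show that $\ell[{}^+A]$ is $F$-injective for every field $\ell$ of characteristic $q \neq p$. By \Cref{cor: wkNiffFinj} applied to the affine semigroup ${}^+A$ and the prime $q$, this is equivalent to the combinatorial identity $\wkq{({}^+A)} = {}^+A$. The key auxiliary fact I would isolate is that the $q$-weak normalization operation is idempotent: for any affine semigroup $A$, $\wkq{(\wkq{A})} = \wkq{A}$. This follows once one observes that $\wkq{A}$ shares the group $G(A)$ and cone $C(A)$ of $A$, so that $\overline{\wkq{A}} = \overline{A}$; then for $\mb v \in \overline{A}$ with $q^e \mb v \in \wkq{A}$, there is an $f$ with $q^{f}(q^e\mb v) \in A$, hence $q^{e+f}\mb v \in A$ and thus $\mb v \in \wkq{A}$. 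In the present case $\wkq{A} = {}^+A$, so idempotency yields $\wkq{({}^+A)} = {}^+A$, which is exactly what \Cref{cor: wkNiffFinj} requires.

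The main (and essentially only) obstacle is this first case, namely verifying the identity $\wkq{({}^+A)} = {}^+A$. The idempotency observation reduces it to a one-line computation, but one must be slightly careful to record that ${}^+A$, $\wkq{A}$, and $A$ all have the same group $G(A)$ and cone $C(A)$, so that their normalizations coincide and every $q$-weak normalization in play is formed inside the common ambient saturation $\overline{A}$. With that remark in place, both alternatives of the conclusion follow formally from the earlier results.
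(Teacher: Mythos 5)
Your proposal is correct and follows exactly the route the paper intends: the paper states this corollary without proof, merely as a combination of \Cref{cor: weak normalization equals normalization if and only if F-nilpotent}, \Cref{prop: two weak normalizations}, and \Cref{cor: wkNiffFinj}, which is precisely your reduction and case split. The one detail you make explicit that the paper leaves silent --- the idempotency $\wkq{(\wkq{A})} = \wkq{A}$ (using $G(\wkq{A}) = G(A)$ and $C(\wkq{A}) = C(A)$, so $\overline{\wkq{A}} = \overline{A}$), needed to pass from $\wkq{A} = {}^{+}A$ to $\wkq{({}^{+}A)} = {}^{+}A$ before invoking \Cref{cor: wkNiffFinj} for the semigroup ${}^{+}A$ --- is stated and proved correctly.
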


As an additional application of our $p$-weak normalization construction, we see that Frobenius test exponent of all ideals in an affine semigroup ring is uniformly bounded. 

\begin{thm}\label{thm: affine semigroup rings have uniformly bounded fte}
Let $A$ be an affine semigroup and let $k$ be a field of prime characteristic $p>0$. Further, let $N_0$ be minimal so that $p^{N_0}(\wkp{A}) \subset A$. Then, for any ideal $I\subset k[A]$, $\fte I\le N_0$. In particular, $\fte^* k[A]\le N_0$.
\end{thm}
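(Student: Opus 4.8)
The plan is to prove the sharper statement that $(I^F)^{\fbp{p^{N_0}}} = I^{\fbp{p^{N_0}}}$ for every ideal $I\subset R:=k[A]$; since $\fte I$ is by definition the least $e$ with $(I^F)^{\fbp{p^e}}=I^{\fbp{p^e}}$, this forces $\fte I\le N_0$. The inclusion $I^{\fbp{p^{N_0}}}\subset (I^F)^{\fbp{p^{N_0}}}$ is free from $I\subset I^F$, so everything reduces to showing that $x\in I^F$ implies $x^{p^{N_0}}\in I^{\fbp{p^{N_0}}}$. The two ingredients I would combine are: (a) the weak normalization $\wk{R}=k[\wkp{A}]$ (\Cref{prop: p-weak normalization gives weak normalization}) is itself $F$-pure, so every ideal of $\wk{R}$ is Frobenius closed; and (b) the defining property of $N_0$, namely $p^{N_0}(\wkp{A})\subset A$, which says precisely that $s^{p^{N_0}}\in R$ for every $s\in\wk{R}$ (expand $s$ as a $k$-combination of monomials $\x^{\mbv}$ with $\mbv\in\wkp{A}$ and apply the freshman's dream).

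For (a) I would first note that the $p$-weak normalization is idempotent, $\wkp{(\wkp{A})}=\wkp{A}$, which is immediate from the definition. Applying \Cref{cor: wkNiffFinj} to the affine semigroup $\wkp{A}$ then shows $k[\wkp{A}]$ is $F$-injective; for affine semigroup rings $F$-injectivity coincides with $F$-purity, and $F$-purity in turn is equivalent to every ideal being Frobenius closed (\Cref{BLRmain} and the discussion in \Cref{subsec: fsings}). Hence Frobenius closures are trivial in $\wk{R}$. With this available, the argument is short: given $x\in I^F$, say $x^{p^e}\in I^{\fbp{p^e}}$ for some $e$, the identity $I^{\fbp{p^e}}\wk{R}=(I\wk{R})^{\fbp{p^e}}$ gives $x^{p^e}\in (I\wk{R})^{\fbp{p^e}}$, so $x$ lies in the Frobenius closure of $I\wk{R}$ computed inside $\wk{R}$. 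By (a) that closure is $I\wk{R}$, so we may write $x=\sum_j s_j a_j$ with $s_j\in\wk{R}$ and $a_1,\dots,a_t$ generators of $I$. Applying $F^{N_0}$ and using additivity of Frobenius in characteristic $p$ yields $x^{p^{N_0}}=\sum_j s_j^{p^{N_0}}a_j^{p^{N_0}}$; by (b) each $s_j^{p^{N_0}}\in R$, while each $a_j^{p^{N_0}}$ generates $I^{\fbp{p^{N_0}}}$, so $x^{p^{N_0}}\in I^{\fbp{p^{N_0}}}$, as required. The final assertion $\fte^*k[A]\le N_0$ is then immediate, since homogeneous parameter ideals form a subclass of all ideals.

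The main obstacle, and really the conceptual heart of the proof, is identifying $\wk{R}$ as the correct intermediate ring and establishing step (a): that $k[\wkp{A}]$ is $F$-pure. This rests on the idempotency of $\wkp{(-)}$ together with the characterization of $F$-injectivity via $p$-weak normality, after which the descent from $\wk{R}$ back to $R$ is controlled entirely by the uniform purely inseparable exponent $N_0$. Once those are in place, the remainder is just the freshman's dream and the formal behaviour of bracket powers under ideal expansion.
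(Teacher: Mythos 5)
Your proposal is correct and follows essentially the same route as the paper's own proof: pass to $\wk{k[A]}=k[\wkp{A}]$, use that it is $F$-pure so that $Ik[\wkp{A}]$ is Frobenius closed, write $x=\sum_i s_iy_i$ with $s_i\in k[\wkp{A}]$, and raise to the $p^{N_0}$ power to descend back to $k[A]$. The only difference is that you spell out the $F$-purity of $k[\wkp{A}]$ (via idempotency of $\wkp{(-)}$ and the $F$-injectivity criterion) in slightly more detail than the paper, which simply cites Bruns--Li--R\"omer; this is a welcome, not substantive, difference.
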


\begin{proof}
Since $k[\wkp{A}]$ is $F$-pure by \cite[Prop.~6.2]{Bruns2006}, so all ideals of $k[\wkp{A}]$ are Frobenius closed. Then, for any ideal $I\subset k[A]$ and $x \in I^F$, we have $x \in I^Fk[\wkp{A}] \subset (Ik[\wkp{A}])^F = Ik[\wkp{A}]$. Writing $I=(y_1,\ldots,y_t)$, we have an expression $x = \sum_{i=1}^t s_i y_i$ where $s_i \in k[\wkp{A}]$, so \[x^{p^{N_0}} = \sum_{i=1}^t s_i^{p^{N_0}}y_i^{p^{N_0}}\] where $s_i^{p^{N_0}} \in k[A]$ for all $i$ as $k[A]\rightarrow k[\wkp{A}]$ is purely inseparable. Hence, $x^{p^{N_0}} \in I^{\fbp{p^{N_0}}}$, so $\fte I \le N_0$.
\end{proof}

This result is a strict improvement to \cite[Cor.~4.6]{DMP} when only considering the Frobenius closure, which assumes that $k[A]$ is $F$-nilpotent to obtain a similar bound. In the next section, we present an algorithm to compute $\wkp{A}$ which also computes the bound above on the Frobenius test exponent.

\section{An algorithm to compute the \texorpdfstring{$p$}{p}-weak normalization}

Our goal now is to give an effective and implementable algorithm to find generators for $\wkp{A}$ given the generators for $A$. We will first present a modification of \cite[Alg~1]{Onthequotient}, which returns a generating set of $A/m$ given generators of $A$, to one which returns a generating set of $\quot{A/m}$ given a generating set of $A$ and $\overline{A}$.

Let $A\subset \N^n$ be an affine semigroup minimally generated by the columns of the $n\times s$ matrix  $L_0$, similarly $\overline{A}$ generated by the columns of the $n\times t$ matrix $M_0$. To compute $\quot{A/m}$, we first create the $n\times (s+t)$ matrix $N = (L_0|(-m)\cdot M_0)$. We then find the affine semigroup $C$ of solutions to the $\N$-linear system of equations \[
N \left(
\begin{array}{c}
\mathbf{\lambda} \\
\mathbf{x}
\end{array}\right) = \mathbf{0}
\] where $\mathbf{\lambda}\in \N^s$ and $\mathbf{x}\in \N^t$ are viewed as vectors of variables. A vector $(\mathbf{\lambda},\mathbf{x})^T$ is in $C$ if and only if $L_0\mathbf{\lambda} = m \cdot M_0 \mathbf{x}$. Since the left hand side is a vector in $A$, we know $m\cdot (M_0 \mathbf{x}) \in A$, or equivalently, $M_0 \mathbf{x} \in A/m$. However, clearly $M_0 \mathbf{x} \in \overline{A}$ as well.  

Hence, given a minimal generating set \[\left\lbrace(\mb \lambda_1,\mb x_1)^T,\ldots, (\mb \lambda_s,\mb x_s)^T\right\rbrace\] of $C$, we have that $\{M_0\mb x_1,\ldots, M_0\mb x_s\}$ generates $\quot{A/m}$, and can be pared down to a minimal generating set thereof. 

\begin{alg}\label{alg1} Compute a minimal generating set of $\quot{A/m}$.\\ \rule{6.5in}{1pt}\vspace{.1in} 
    
    Input: A matrix $L_0$ of the $t$ minimal generators of $A\subset \mathbb{N}^n$ as columns, a matrix $M_0$ of the $u$ minimal generators for $\overline{A}\subset\mathbb{N}^n$, and a positive integer $m$.
    
    Output: A matrix $L_1$ whose columns are a minimal generating set for $A/m\cap G(A)$.
    \begin{enumerate}
        \item Let $N$ be the matrix $(L_0|(-m)M_0).$
        \item Produce the minimal generating set of the $\mathbb{N}$-valued solutions to the matrix equation \[N\left( \begin{array}{c}
        \mathbf{\lambda} \\
        \mb x
        \end{array}\right) = \mb 0\] where $\mb \lambda = (\lambda_1,\ldots,\lambda_t)$ and $\mb x = (x_1,\ldots,x_u)$ are vectors of variables.
        \item Project the solutions in the previous step to the last $u$ coordinates, store them as columns of a matrix $L_1$.
        \item Remove any columns of $L_1$ that are in the semigroup generated by the remaining columns.
        \item Return $L_1$.
    \end{enumerate}

    We will call this algorithm via the formula $q(L_0,M_0,m) = L_1$.
    
    \noindent \rule{6.5in}{1pt}
\end{alg}

To apply this algorithm to help us construct $\wkp{A}$, note that $\wkp{A} = \quot{A/p^{N_0}}$ for all $N_0$ large enough. In particular, we need only compute until \[\quot{A/p^{N}}=\quot{A/p^{N+1}},\] and take $N_0 = N$. Further, to find the matrix $M_0$, one can use the \verb|integralClosure| method in \textit{Macaulay2} \cite{M2} or find a Hilbert basis using \textit{Normaliz} \cite{Normaliz}.\\

\begin{alg}\label{alg2} Compute a minimal generating set of $\wkp{A}.$ \\ \rule{6.5in}{1pt} 

Input: A matrix $L_0$ of the minimal generators of $A$ as columns, a matrix $M_0$ of the minimal generators for $\overline{A}$, and a prime integer $p$.

Output: A matrix $L_{N_0}$ whose columns are a minimal generating set for $\wkp{A}$, and $N_0$ minimal so that $p^{N_0}(\wkp{A})\subset A$.
\begin{enumerate}
    \item Use \Cref{alg1} to obtain $L_1 = q(L_0,M_0,p)$. Check if the columns of $L_1$ are inside $\left\langle L_0 \right\rangle$. If so, break and return $(L_0,0).$ If not, continue. \[\vdots\]
     
    \item[(s+1)] Obtain $L_{s+1} = q(L_{s},M_0,p)$. Check if $L_{s+1} \in \left\langle L_{s}\right\rangle$. If so, break and return $(L_{s},s)$. If not, continue. 
\end{enumerate}

Since there is an $N_0$ such that $\quot{A/p^{N_0}}=\quot{A/p^{N_0+1}}$, this terminates at stage $N_0+1$ and the algorithm returns $(L_{N_0},N_0)$. 

\noindent \rule{6.5in}{1pt} 
\end{alg}

We now give an example implementation of the algorithms above. The code is written in SageMath \cite{sagemath} and uses the integer programming software 4ti2 \cite{4ti2} in a crucial way to solve the integer programming problems which underpin semigroup membership testing.

\begin{verbatim}
def membershiptest(A,v):
    #Semigroup membership test
    W = four_ti_2.zsolve(A,["="]*A.nrows(),v,[1]*A.ncols());
    if not W[0]:
        return False
    else:
        return True
    
def subsemigrouptest(A,B):
    #Sub-semigroup test
    for i in range(A.ncols()):
        if not membershiptest(B,A.T[i]):
            return False    
    return True

def reducegenset(A):
    #Reduces a generating set to a minimal generating set
    eliminate = [];
    #Delete 0 columns
    for i in range(A.ncols()):
        if matrix(A.T[i]) == matrix([0]*A.nrows()):
            eliminate.append(i)
    A = A.delete_columns(eliminate);
    eliminate = [];
    #Delete repeat columns
    for i in range(A.ncols()-1):
        for j in range(i+1,A.ncols()):
            if A.T[i] == A.T[j]:
                eliminate.append(j)
    A = A.delete_columns(eliminate);
    eliminate = [];
    #Delete redundant semigroup gens
    for i in range(A.ncols()):
        B = A.delete_columns([i]);
        if subsemigrouptest(A,B):
            eliminate.append(i);
    return A.delete_columns(eliminate)

def quotientbypin(A,B,p):
    #Runs Algorithm 1
    L = list(IntegerRange(A.ncols(),A.ncols()+B.ncolS()));
    C = A.augment((-p)*B);
    W = four_ti_2.zsolve(C,["="]*C.nrows(),[0]*C.nrows(),[1]*C.ncols());
    D = W[1].T.matrix_from_rows(L);
    return reducegenset(B*D)

def wkp(A,B,p): 
    #Runs Algorithm 2
    steps = 0;
    genset0 = A;
    genset1 = quotientbypin(A,B,p); 
    while(True):
        if subsemigrouptest(genset1,genset0):
            return [matrix(genset0), steps]
        else:
            steps = steps+1;
            genset0 = genset1;
            genset1 = quotientbypin(genset0,B,p);
            continue
\end{verbatim} 

We note that this implementation becomes slower as $p$ increases, due to the fact that semigroup membership testing (and solving the underlying integer programming problem) is known to be NP-complete, and the possible solution space grows dramatically as $p$ increases.

\begin{ex}\label{ex: 2 different weak normalizations part 2}
    Let $A$, $M$ be as in \Cref{ex: 2 different weak normalizations}, and recall $G(A)=\mathbb{Z}^3$. Let $I$ be the $3\times 3$ identity matrix, whose columns generate $\overline{A}$. Then, \[
    \verb|wkp(M,I,2)|= \left(\left(\begin{array}{ccccc}
0 & 0 & 1 & 0 & 0\\
0 & 1 & 0 & 1 & 1\\   
3 & 0 & 0 & 2 & 1
\end{array}\right),\, 2\right) \hspace{12.5pt}\text{and}\hspace{12.5pt} \verb|wkp(M,I,3)| = \left(\left(\begin{array}{cccc}
2 & 0 & 0 & 1\\   
0 & 0 & 1 & 1\\  
0 & 1 & 0 & 0
\end{array}\right),\, 1\right).
    \] This tells us $^{*2}A = A/2^2$ and $^{*3}A = A/3$. Furthermore, we see \[
    \verb|wkp(M,I,5)| =  \left(\left(
        \begin{array}{cccccccc}
             2 & 0 & 0 & 0 & 1 & 0 & 1 & 1\\
             0 & 1 & 0 & 1 & 1 & 1 & 1 & 1\\
             0 & 0 & 3 & 1 & 0 & 2 & 1 & 2
        \end{array}
    \right), 2\right)
    \] and the matrix on the left has the generators of the seminormalization of $A$ as its columns. 
\end{ex}

We can use the generators of $\wkp{A}$ obtained from \Cref{alg2} and \textit{Macaulay2} to compute Frobenius closure in $\mathbb{F}_p[A]$ via \Cref{thm: affine semigroup rings have uniformly bounded fte}.

\begin{ex}\label{ex: 2 different weak normalizations part 3}
Let $A$ be the semigroup from \Cref{ex: 2 different weak normalizations}. We will consider the Frobenius closure of some ideals in $Q=\mathbb{F}_2[A]$ and $R=\mathbb{F}_3[A]$. By \Cref{ex: 2 different weak normalizations part 2}, for all ideals $I\subset Q$ we have $\fte I \le 2$ and for all ideals $J\subset R$ we have $\fte J \le 1$. Using the following code in \textit{Macaulay2}, we are able to compute the Frobenius closure of ideals in $Q$ and $R$. The key method is the preimage function, since \verb|preimage(F^e,F^e(K))| is  $K^F$ for $e$ at least $\fte K.$

\begin{verbatim}
 i1 : p = 2
 i2 : k = ZZ/p
 i3 : T = k[a,b,c]
 i4 : S = k[r,s,t,u,v]
 i5 : phi = map(T,S,{b,a*b^3,a^2,b^2*c^2,c^3})
 i6 : Q = S/ker(phi)
 i7 : F = map(Q,Q,{r^p,s^p,t^p,u^p,v^p})
 i8 : K = ideal(r^2+s^2,u^2,t^2+v^2)
 i9 : preimage(F,F(K)) == preimage(F^2,F^2(K))
 o9 : False
i10 : preimage(F^2,F^2(K)) == preimage(F^3,F^3(K)
o10 : True
\end{verbatim}

The output above indicates \verb|preimage(F^2,F^2(K)) = (s,u^2,ru,t^2+v^2,r^2)| is the Frobenius closure of $K$. We can repeat the same code with $p=3$.

\begin{verbatim}
i1 : p = 3
i2 : k = ZZ/p
i3 : T = k[a,b,c]
i4 : S = k[r,s,t,u,v]
i5 : phi = map(T,S,{b,a*b^3,a^2,b^2*c^2,c^3})
i6 : R = S/ker(phi)
i7 : F = map(R,R,{r^p,s^p,t^p,u^p,v^p})
i8 : J = ideal(r^3+s^3,u^3,t^3+v^3)
i9 : preimage(F,F(J)) == preimage(F^2,F^2(J))
o9 : True
\end{verbatim}

This indicates \verb|preimage(F,F(J)) = (u^2,su,ru,s^2,rs,t^3+v^3,r^3)| is the Frobenius closure of $J$. A similar calculation with $p=5$ and using the generators of the seminormalization instead will demonstrate that $\mathbb{F}_5[\,^{+}A]$ has all ideals Frobenius closed, as predicted by \cite[Prop.~6.2]{Bruns2006} and \Cref{cor: wkNiffFinj}. 
\end{ex}

\bibliographystyle{alpha}
\bibliography{maddoxsinghcitations.bib}
\end{document}